\documentclass[graybox]{svmult}

\usepackage{mathptmx}       
\usepackage{helvet}         
\usepackage{courier}        
\usepackage{type1cm}

\usepackage{color}
\usepackage{graphicx,subfigure,amsfonts,amsmath,amsxtra}

\usepackage{amssymb,mathrsfs}
\usepackage[mathscr]{eucal}
\usepackage{pstricks}

\usepackage[sort,comma]{natbib}

\bibliographystyle{apalike}

\allowdisplaybreaks



\makeatletter
\newcommand{\pushright}[1]{\ifmeasuring@#1\else\omit\hfill$\displaystyle#1$\fi\ignorespaces}
\newcommand{\pushleft}[1]{\ifmeasuring@#1\else\omit$\displaystyle#1$\hfill\fi\ignorespaces}
\makeatother

\def\qed{\hfill{$\Box$}}
\def\F{{\mathcal F}}
 
\def\ss{{\mathbb  S}}
\def\lan{\big\langle}
\def\ran{\big\rangle}
\newcommand{\LL}{\mathcal L}
\newcommand{\R}{\mathbb R}
\newcommand{\A}{\mathcal A}
\newcommand{\B}{\mathfrak B}
\renewcommand{\d}{\mathrm{d}}

\newcommand{\la}{\lambda}
\newcommand{\La}{\Lambda}

\newcommand{\e}{\varepsilon}
\newcommand{\m}{{\mathfrak m}}
\newcommand{\al }{\alpha}

\newcommand{\sg}{\sigma}

 \newcommand{\tr}{\text {tr}}
 
 \newcommand{\sgn}{\text{sgn}}
\renewcommand{\P}{\mathbb P}

\renewcommand{\E}{{\mathbb E}}

\newcommand{\N}{\mathbb N}
\newcommand{\disp}{\displaystyle}

\newcommand{\wdt}{\widetilde}

\newtheorem{thm}{Theorem}[section]

\newtheorem{lem}[thm]{Lemma}

\newtheorem{Assumption}[thm]{Assumption}
\newtheorem{rem}[thm]{Remark}

\newtheorem{exm}[thm]{Example}

\newcommand{\set}[1]{\left\{#1\right\}}
\newcommand{\abs}[1]{\left\vert #1\right\vert}

\def\george#1 {\fbox {\footnote {\ }}\ \footnotetext {From George: #1}}
\def\fubao#1 {\fbox {\footnote {\ }}\ \footnotetext {From Fubao: #1}}
\def\chao#1 {\fbox {\footnote {\ }}\ \footnotetext {From Chao: #1}}
\newcommand{\one}{\mathbf 1}

\begin{document}

\title*{Regime-Switching Jump Diffusions with Non-Lipschitz  Coefficients and Countably Many Switching States: Existence and Uniqueness, Feller, and Strong Feller Properties\thanks{The study was initiated during a workshop held at IMA, University of Minnesota. The support of IMA with funding provided by the National Science Foundation is acknowledged. The research was also supported in part
by the National Natural Science Foundation of China under Grant No. 11671034,  the US Army Research Office, and the Simons Foundation  Collaboration Grant (No. 523736).}
}
\titlerunning{Regime-Switching Jump Diffusions}

\author{Fubao Xi
\and George Yin
\and Chao Zhu}

\institute{F. Xi \at  School of Mathematics and Statistics, Beijing Institute of Technology, Beijing 100081, China, \email{xifb@bit.edu.cn}
\and G. Yin \at Department of Mathematics,
Wayne State University, Detroit, MI 48202, \email{gyin@math.wayne.edu} \and C. Zhu \at Department of Mathematical
Sciences, University of Wisconsin-Milwaukee, Milwaukee,
WI 53201, \email{zhu@uwm.edu}}

\maketitle

\abstract*{
This work focuses on a class of regime-switching jump diffusion processes, which
is a two component Markov processes $(X(t),\Lambda(t))$, where $\Lambda(t)$ is a component representing discrete events taking values in a countably infinite set.
Considering the corresponding stochastic differential equations,
 our main focus is  on treating
 those with non-Lipschitz coefficients.
 We first show that there exists a unique strong solution to the corresponding stochastic differential equation. Then Feller   and strong Feller properties are investigated.
}

\abstract{
This work focuses on a class of regime-switching jump diffusion processes, which
is a two component Markov processes $(X(t),\Lambda(t))$, where $\Lambda(t)$ is a component representing discrete events taking values in a countably infinite set.
Considering the corresponding stochastic differential equations,
 our main focus is  on treating
 those with non-Lipschitz coefficients.
 We first show that there exists a unique strong solution to the corresponding stochastic differential equation. Then Feller   and strong Feller properties are investigated.
\newline \vskip 0.02 in
\noindent {\bf Keywords.} Regime-switching jump diffusion, non-Lipschitz condition, Feller property, strong Feller property.
\newline \vskip 0.02 in
\noindent {\bf Mathematics Subject Classification.} 60J27, 60J60, 60J75, 60G51.
}

\section{Introduction}\label{sect:intro}
In the past decade, much attention has been devoted to a class of hybrid systems, namely, regime-switching diffusions.
Roughly, such processes can be considered as a two component process
$(X(t),\Lambda(t))$, an analog  (or continuous
 state) component $X(t)$ and a switching (or discrete event)
  process $\Lambda(t)$.
Some of the representative works can be found in \cite{MaoY} and \cite{YZ-10}.
 The former dealt with
 regime-switching diffusions in which the switching process is a continuous-time Markov chain independent of the Brownian motion,
 whereas the latter treated  processes in which the switching component depends on the continuous-state component.
 It has been found that the discrete event process,
 taking values in a finite or countable set,
 can be used to delineate, for example, random environment or other random factors that are not represented in the usual diffusion formulation. Seemingly similar to the diffusion processes,
  in fact,
  regime-switching diffusions have very different behavior compared to the usual diffusion processes.  For example, it has been demonstrated in  \cite{LawleyMR-14,YinZW-12} that two stable (resp., unstable) ordinary differential equations can be coupled to produce an unstable (resp., stable) regime-switching system.
  The consideration of regime-switching diffusions has substantially enlarged the applicability of stochastic processes for a wide variety of problems ranging from network systems, multi-agent systems, ecological and biological applications,  financial engineering, risk management, etc.

 Continuing on the effort of studying  regime-switching diffusions,
 \cite{CCTY18a} obtained maximum principle and Harnack inequalities for switching jump diffusions using mainly probabilistic arguments, and \cite{CCTY18b} proceeded further to obtain recurrence and ergodicity of switching jump diffusions. In another direction,  \cite{XiZ-17} dealt with regime-switching jump diffusions with countable number of switching values. \cite{NgurenY-16} considered
  switching diffusions in which the switching process depends on the past information of the continuous state and takes values in a countable state space; the corresponding recurrence and ergodicity was considered in \cite{NguyenY-18}.

A standing assumption in the aforementioned references is that the coefficients of the associated stochastic differential equations are (locally) Lipschitz. While it is a convenient assumption, it is rather restrictive in many applications. For example,   the diffusion coefficients in the   Feller branching diffusion  and the Cox-Ingersoll-Ross model  are only H\"older continuous. We refer to Chapters 12 and 13 of \cite{Klebaner-05} for an introduction to these models.
Motivated by these considerations, there has been much efforts devoted to the study of stochastic differential equations with non-Lipschitz coefficients. An incomplete list includes \cite{YamaW-71,LiPu-12,FangZ-05,LiMyt-11,Bass-03}, among many others.

 While there are many works on diffusions and jump diffusions with non-Lipschitz coefficients, the related research on regime-switching jump diffusions is relatively scarce. This work aims to investigate regime-switching jump diffusion processes with non-Lipschitz coefficients. More precisely, the purpose of this paper is two-fold: (i) to establish the strong existence and uniqueness result for stochastic differential equations associated with  regime-switching jump diffusions, in which the coefficients are non-Lipschitz and the switching component has countably  many states;  and (ii) to derive sufficient conditions for Feller and strong Feller properties. Our focus is devoted to establishing non-Lipschitz sufficient conditions for the aforementioned properties.


 The rest of the paper is arranged as follows. Examining the associated stochastic differential equations,
 we begin to obtain the existence and uniqueness of the solution of the stochastic differential equations in Section \ref{sect-str-formulation}. Then Section \ref{sect-Feller} proceeds with the study of Feller properties.
 Section \ref{sect-str-Feller} further extends the study to treat strong Feller properties.

\section{Strong Solution: Existence and Uniqueness}\label{sect-str-formulation}

  We work with $(U, \mathfrak U)$
   a measurable space, $\nu$ a $\sigma$-finite measure on $U$, and
$\ss:=\{  1, 2, \dots \}$.
Assume that  $d \ge 1 $ is a positive integer,
 $b: \R^{d}\times \ss \mapsto \R^{d}$, $\sigma:\R^{d}\times \ss \mapsto \R^{d\times d}$, and $c: \R^{d}\times \ss \times U \mapsto \R^{d}$ be Borel measurable functions.
 Let $(X,\La)$ be a right continuous, strong Markov process with
left-hand limits on $\R^d \times \ss$. The first component $X$
satisfies the following stochastic differential-integral equation
\begin{equation}
\label{eq:X}
\d X(t) = b(X(t), \La (t) ) \d t + \sigma(X(t), \La (t))\d W(t) + \int_{U} c(X(t-), \La (t-), u)\wdt N(\d t, \d u),
\end{equation}
where  $W$ is a standard $d$-dimensional Brownian motion,  $N$ is a Poisson random measure on $[0,\infty)\times U$ with intensity $\d t \,\nu(\d u)$, and  $\wdt N$ is the associated compensated Poisson random measure.
The second component $\Lambda$ is a
continuous-time random
process
taking values in the countably infinite
set $\ss$ such that
\begin{equation}\label{eq:La}
\P \{\La(t+\Delta)=l | \La(t)=k, X(t)=x \} =\begin{cases}
 q_{kl}(x) \Delta +o(\Delta),
&  \, \, \hbox{if}\, \, k \ne l, \\
1+q_{kk}(x) \Delta +o(\Delta), &  \, \, \hbox{if}\, \, k = l,
\end{cases}
\end{equation}
uniformly in $\R^d$, provided $\Delta \downarrow 0$.

To proceed, we construct a family of disjoint intervals $\{\Delta_{ij}(x): i,j \in \ss\}$ on the positive half real line as follows
\begin{align*}
  \Delta_{12}(x)  & = [0, q_{12}(x)),   \\
   \Delta_{13}(x)  & = [q_{12}(x)), q_{12}(x) + q_{13}(x)), \\
    &\ \   \vdots  \\
    \Delta_{21}(x) & = [q_{1}(x), q_{1}(x) + q_{21}(x)), \\
    \Delta_{23}(x) & = [q_{1}(x) + q_{21}(x)), q_{1}(x) + q_{21}(x) + q_{23}(x)), \\
    & \ \   \vdots   \\
      \Delta_{31}(x) & = [q_{1}(x) + q_{2}(x),q_{1}(x) +  q_{2}(x) + q_{31}(x)), \\
      & \ \   \vdots
\end{align*}
 where for convenience,
   we set
 $\Delta_{ij}(x) = \emptyset$ if $q_{ij}(x) = 0$, $i \not= j$.
 Note that for each $x\in \R^{d}$, $\{\Delta_{ij}(x): i,j \in \ss\}$
 are disjoint intervals, and the length of the interval $\Delta_{ij}(x)$
 is equal to $q_{ij}(x)$.
We then define a function $h$: $\R^{d} \times \ss \times \R_{+} \to \R$ by
\begin{equation}\label{eq-h-fn}h(x,k,r)=\sum_{l \in \ss}(l-k){\mathbf{1}}_{\Delta_{kl}(x)}(r).\end{equation} That is,
 for each $x\in \R^{d}$ and $ k \in \ss$,  we set $h(x,k,r)=l-k$ if $r \in\Delta_{kl}(x)$ for some $l\neq k$;
otherwise $h(x,k,r)=0$.   Consequently,
we can describe the evolution of  $\La$ using
the following stochastic differential equation \begin{equation}\label{eq:La-SDE}
 \La(t )=\La(0) + \int_{0}^{t }\int_{\R_{+}}h(X(s-),\La(s-),r){N}_{1}(\d s,\d r),
\end{equation}
 where $N_{1}$ is a Poisson random measure on $[0,\infty) \times [0,\infty)$ with characteristic measure $\m(\d z)$, the Lebesgue measure.

For convenience in the subsequent discussion,
let us give the infinitesimal generator $\A$
of the regime-switching jump diffusion $(X,\La)$
 \begin{equation}
\label{eq-operator}
\A f(x,k) : = \LL_{k}f(x,k) +  Q(x)f(x,k),
\end{equation}  with $a(x,k): = \sigma\sigma^{T}(x,k)$ and
\begin{align}
\label{eq-Lk-operator-defn}&
\begin{aligned} {\LL}_{k} f(x,k)
: =&\,  \frac {1}{2}\tr\bigl(a(x,k)\nabla^2 f(x,k)\bigr)+\langle b(x,k),
\nabla f(x,k)\rangle\\
  &  \, + \int_{U}
\bigl(f(x+c(x,k,u),k)-f(x,k)- \langle \nabla f(x,k), c(x,k,u)\rangle
\bigr)\nu(\d u), \end{aligned} \\
\label{eq-Qx-operator}
&\begin{aligned}  Q(x)f(x,k)  &:= \sum_{j\in \ss} q_{kj}(x) [f(x,j) - f(x,k)] \\
  & = \int_{[0,\infty)}[ f(x, k+ h(x,k,z)) - f(x,k)] \m(\d z).\end{aligned}
\end{align}
Define a metric $\lambda (\cdot,\cdot)$ on $\R^{d}
\times \ss$ as $\lambda \bigl((x,m), (y,n) \bigr)=|x-y|+d(m,n)$,
where $d(m,n) =\one_{\{ m\neq n\}} $ is the discrete
metric
on $\ss$. Let ${\B}(\R^{d} \times \ss)$ be the Borel $\sigma$-algebra on $\R^{d}
\times \ss$. Then $(\R^{d} \times \ss, \lambda (\cdot,\cdot), {\B}(\R^{d} \times \ss))$ is a locally compact and separable metric
space. For the existence and uniqueness of the strong Markov process
$(X,\La)$ satisfying
system \eqref{eq:X} and
\eqref{eq:La-SDE}, we make the following assumptions.

\begin{Assumption}\label{assumption-non-linear-growth}
{\rm There exists a  nondecreasing  function $\zeta:[0,\infty) \mapsto [1,\infty)$
 that is continuously differentiable and that satisfies
\begin{equation}\label{eq-zeta-sec-2}
 \int_{0}^{\infty} \frac{\d r}{ r\zeta(r) + 1} =\infty,
\end{equation}
such that for all 
$x\in \R^{d}$ and $k \in \ss$,
\begin{align}
\label{eq-coeffs-non-linear-growth}
& 2 \lan x, b(x,k)\ran + |\sigma(x,k)|^{2} + \int_{U}  |c(x,k,u)|^{2} \nu(\d u)  \le H  [|x|^{2}\zeta(|x|^{2}) + 1], \\
\label{q_k<Hk}
&q_{k}(x) :=-q_{kk}(x) = \sum_{l\in\ss\setminus \{k\}}q_{kl}(x)\le H k,  \\ \label{eq:switching-2nd-moment-condition}
& \sum_{l\in\ss\setminus \{k\}}(f(l)-f(k)) q_{kl} (x) \le H( 1+ \Phi(x) + f(k) ),
\end{align} where  $H $ is a positive  constant,
\begin{equation}
\label{eq-fn-Phi}
 \Phi(x): = \exp\bigg\{\int_{0}^{|x|^{2}}\frac{\d r}{r\zeta(r) +1}\bigg \}, \quad x\in \R^{d},
\end{equation}
and the function $f:\ss\mapsto \R_{+}$ is nondecreasing satisfying
$f(m)\to \infty$ as $m \to \infty$.  In addition, assume
 there exists some   $\delta \in (0, 1]$ such that \begin{equation}
\label{eq-q(x)-holder}
\sum_{l\in \ss\setminus\{k\}}| q_{kl}(x) - q_{kl}(y)| \le H |x-y|^{\delta}
\end{equation} for all $k\in \ss$ and $x,y\in \R^{d}$.
}\end{Assumption}

\begin{Assumption}\label{assumption-non-lip} {\rm Assume the following conditions hold.
\begin{itemize}
  \item  If $d =1$,
   then   there exist a positive number $\delta_{0}$  and  a nondecreasing   and  concave function $\varrho: [0,\infty)\mapsto [0,\infty)$ satisfying \begin{equation}
\label{eq-varrho-non-integrable}
 \int_{0^{+}} \frac{\d r}{\rho(r)} = \infty
\end{equation} such that  for all $k \in \ss$,   $R > 0$, and  $x,z\in \R$ with  $|x| \vee |z| \le R$ and $|x-z| \le \delta_{0}$, \begin{align}
\label{eq1-1d path-cdn}
&\sgn (x-z) (b(x,k)-b(z,k) )  \le  \kappa_{R} \varrho(|x-z|),\\
\label{eq2-1d path-cdn}  & |\sg(x,k)- \sg(z,k)|^{2} + \int_{U} |c(x,k,u)-c(z,k,u)|^{2}\nu(\d u) \le \kappa_{R} |x-z|,
\end{align}  where $\kappa_{R}$ is a positive constant and $\sgn(a)= 1$ if $a> 0$ and $-1$ if $a\le 0$. In addition, for each $k\in\ss$, the function $c$ satisfies that
\begin{equation}
\label{eq1-1d-c-fn-path-cdn}
  \text{the function } x\mapsto x+ c(x,k,u) \text{ is nondecreasing for all }u\in U; \end{equation}  or, there exists some    $ \beta >0 $ such that \begin{equation}
\label{eq2-1d-c-fn-path-cdn}
 |x-z+ \theta(c(x,k,u)- c(z,k,u))| \ge \beta |x-z|, \ \ \forall (x,z,u,\theta)\in \R\times\R\times U \times [0,1].
\end{equation}

  \item If $d \ge 2$, then there exist a positive number $\delta_{0}$,
  and a nondecreasing and concave function $\varrho: [0,\infty)\mapsto [0,\infty)$ satisfying 
   \begin{equation}\label{eq-varrho-conditions-Feller}0 < \varrho(r) \le  (1+r)^{2}\varrho(r/(1+r)) \text{ for all } r >  0,\,  \text{and}\, \int_{0^{+}} \frac{\d r}{\varrho(r)} = \infty
 \end{equation}
such that for all $k \in \ss$,   $R > 0$, and  $x,z\in \R^{d}$ with  $|x| \vee |z| \le R$ and $|x-z| \le \delta_{0}$, \begin{equation}
\label{eq-path-condition} \begin{aligned}
2   \lan x-z, & \ b(x,k)-b(z,k)\ran +  |\sigma(x,k) - \sigma(z,k)|^{2} \\&\quad
 + \int_{U}| c(x,k,u)-c(z,k,u) |^{2}  \nu(\d u) \le  \kappa_{R} \varrho(|x-z|^{2}),\end{aligned}
\end{equation}  where $\kappa_{R}$ is a positive constant.

\end{itemize}}\end{Assumption}

\begin{rem}{\rm
We make some comments  concerning Assumptions    \ref{assumption-non-linear-growth} and \ref{assumption-non-lip}.
 Examples of functions satisfying \eqref{eq-zeta-sec-2} include $\zeta(r) =1$,   $\zeta(r) = \log r$, and $  \zeta(r) = \log r \log(\log r) $ for $r$ large. When $\zeta(r) =1$, \eqref{eq-coeffs-non-linear-growth} reduces to the usual linear growth condition.  With other choices of $\zeta$, \eqref{eq-zeta-sec-2} allows super-linear condition for the coefficients of \eqref{eq:X} with respect to the variable $x$ 
 for each $k\in \ss$. This is motivated by applications such as Lotka-Volterra models, in which the coefficients have superlinear growth conditions.   Conditions \eqref{q_k<Hk} and \eqref{eq:switching-2nd-moment-condition} are imposed so that the $\La$ component will not explode in finite time with probability 1; see the proof of Theorem \ref{thm-soln-existence-uniqueness} for details.

 Examples of functions  satisfying \eqref{eq-varrho-non-integrable} or \eqref{eq-varrho-conditions-Feller}
 include $\varrho(r) = r$ and concave and increasing  functions such as $\varrho(r) = r \log(1/r)$,  $\varrho(r) = r \log( \log(1/r))$, and $\varrho(r) = r \log(1/r)  \log( \log(1/r))$ for $r \in (0,\delta)$ with $\delta > 0$ small enough. When $\varrho(r)= r$, Assumption  \ref{assumption-non-lip} is just the usual local Lipschitz condition. With other choices of continuity modularity, Assumption  \ref{assumption-non-lip} allows the drift, diffusion, and jump coefficients of \eqref{eq:X} to be non-Lipschitz with respect to the 
 variable $x$.  This, in turn, presents more opportunities
 for building
 realistic and flexible mathematical models for a wide range of applications. Indeed, non-Lipschitz coefficients are present in areas  such as
 branching diffusion in biology, the Cox-Ingersoll-Ross model in math finance, etc.

It is also worth pointing out that  \eqref{eq1-1d path-cdn}, \eqref{eq2-1d path-cdn}, and \eqref{eq-path-condition} of Assumption   \ref{assumption-non-lip}  only require the modulus of continuity to hold in a small neighborhood of the diagonal line $x=z$ in $\R^{d}\otimes\R^{d}$ with $|x|\vee |z| \le R$ for each $R > 0$. This is in contrast to those in \cite{LiPu-12} and adds some subtlety in the proof of pathwise uniqueness for \eqref{eq:X-k sde}.

When $d =1$, Assumption \ref{assumption-non-lip} allows the diffusion coefficient $\sigma(\cdot, k)$ to be locally H\"older continuous with exponent $\alpha \in [\frac 12, 1]$. This is the celebrated result in \cite{YamaW-71}. Such a result was extended to stochastic differential equations with jumps; see, for example, \cite{FuLi-10, LiMyt-11} and \cite{LiPu-12}, among others. In particular, \cite{LiPu-12} shows that if \eqref{eq1-1d-c-fn-path-cdn} holds, the function $x\mapsto \int_{U} c(x,k,u)\nu(\d u) $ can be locally  H\"older continuous with exponent $\alpha \in [\frac 12, 1]$ as well.   The continuity assumption \eqref{eq1-1d path-cdn} on the drift coefficient $b(\cdot, k)$ is slightly more general than that in \cite{LiPu-12}. In particular,   \eqref{eq1-1d path-cdn} will be satisfied as long as $b(\cdot, k)$ is decreasing.

}
\end{rem}

 \begin{lem}\label{lem-pathwise-unique}
Suppose  Assumption \ref{assumption-non-lip} and \eqref{eq-coeffs-non-linear-growth} hold. Then for each $k \in \ss$,   the stochastic differential equation
  \begin{equation}
\label{eq:X-k sde}  \begin{aligned}X(t) = x+ \int_{0}^{t}b(X(s),  k ) \d s +  \int_{0}^{t} \sigma(X(s), k)\d W(s) +  \int_{0}^{t}\int_{U} c(X(s-), k, u)\wdt N(\d s, \d u)
\end{aligned}\end{equation} has a unique non-explosive strong solution.
\end{lem}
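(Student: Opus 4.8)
Fix $k\in\ss$ throughout and write $\LL_{k}$ for the operator in \eqref{eq-Lk-operator-defn}. Since the switching component is frozen, \eqref{eq:X-k sde} is an ordinary jump-diffusion SDE, and the plan is to establish (i) pathwise uniqueness, (ii) weak existence, and then invoke the Yamada--Watanabe principle for jump-diffusions to obtain a unique strong solution up to its explosion time, and finally (iii) rule out explosion by a Lyapunov function. The growth hypothesis \eqref{eq-coeffs-non-linear-growth} will drive (ii) and (iii), while Assumption~\ref{assumption-non-lip} drives (i).

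\textbf{Pathwise uniqueness.} Let $X$ and $Z$ be two solutions of \eqref{eq:X-k sde} on a common basis driven by the same $W$ and $N$ with $X(0)=Z(0)=x$, and set $\tau_{R}=\inf\{t:|X(t)|\vee|Z(t)|\ge R\}$ so that the local conditions of Assumption~\ref{assumption-non-lip} apply up to $\tau_{R}$. For $d=1$ I would follow the Yamada--Watanabe scheme: using the Osgood condition \eqref{eq-varrho-non-integrable}, construct $\phi_{n}\in C^{2}(\R)$ with $\phi_{n}\uparrow|\cdot|$, $|\phi_{n}'|\le 1$, and $0\le\phi_{n}''(y)\le 2/(n\varrho(|y|))$ supported in a shrinking neighborhood of the origin. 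Applying It\^o's formula to $\phi_{n}(X(t\wedge\tau_{R})-Z(t\wedge\tau_{R}))$, the drift contribution is controlled by the one-sided condition \eqref{eq1-1d path-cdn}, the Brownian second-order term by \eqref{eq2-1d path-cdn} and the bound on $\phi_{n}''$, and the compensated-jump contribution—after subtracting its first-order part—reduces to a second-order remainder again estimated through \eqref{eq2-1d path-cdn}. Taking expectations, letting $n\to\infty$, and applying Bihari's inequality via \eqref{eq-varrho-non-integrable} forces $\E|X(t\wedge\tau_{R})-Z(t\wedge\tau_{R})|=0$, and $R\to\infty$ gives $X\equiv Z$. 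For $d\ge2$ I would instead apply It\^o's formula to $\phi_{n}(|X-Z|^{2})$ with $\phi_{n}$ built from $\varrho$ as in \eqref{eq-varrho-conditions-Feller}, collecting all drift, diffusion, and jump contributions into the single bound \eqref{eq-path-condition}; a Bihari/Gronwall argument then yields $|X-Z|\equiv0$.

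\textbf{Weak existence, strong solution, and non-explosion.} For weak existence I would mollify and truncate the coefficients to obtain globally Lipschitz $b_{m},\sigma_{m},c_{m}$ admitting strong solutions $X_{m}$, derive uniform moment bounds from \eqref{eq-coeffs-non-linear-growth}, establish tightness of the laws of $X_{m}$ on the Skorokhod space, and identify any subsequential limit as a solution of the martingale problem for $\LL_{k}$, hence a weak solution of \eqref{eq:X-k sde}. Pathwise uniqueness together with weak existence then gives a unique strong solution up to its explosion time. To show it is global I would use $\Phi$ from \eqref{eq-fn-Phi}, which tends to infinity by \eqref{eq-zeta-sec-2} and is thus a proper Lyapunov function. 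Writing $g(s)=\int_{0}^{s}\d r/(r\zeta(r)+1)$, so that $\Phi(x)=\exp\{g(|x|^{2})\}$, $g'(s)=(s\zeta(s)+1)^{-1}$, and $\nabla\Phi(x)=2\Phi(x)g'(|x|^{2})x$, a direct computation shows that the second-order diffusion remainder is proportional to $(g')^{2}+g''$, which is nonpositive because $\zeta\ge1$ is nondecreasing; hence the leading part of $\LL_{k}\Phi$ is at most $\Phi(x)g'(|x|^{2})\bigl[2\langle x,b\rangle+|\sigma|^{2}+\int_{U}|c|^{2}\nu(\d u)\bigr]$. By \eqref{eq-coeffs-non-linear-growth} this is bounded by $H\Phi(x)g'(|x|^{2})[|x|^{2}\zeta(|x|^{2})+1]=H\Phi(x)$, since $g'(|x|^{2})[|x|^{2}\zeta(|x|^{2})+1]=1$; after checking that the jump remainder is likewise $O(\Phi)$ one obtains $\LL_{k}\Phi\le C\Phi$, and Khasminskii's test rules out finite-time explosion.

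\textbf{Main obstacle.} The delicate point, as flagged in the remark, is the jump term in the pathwise-uniqueness estimate: the continuity conditions \eqref{eq2-1d path-cdn} and \eqref{eq-path-condition} hold only on the diagonal neighborhood $\{|x-z|\le\delta_{0},\,|x|\vee|z|\le R\}$, whereas a single jump $\gamma(u)=c(X,k,u)-c(Z,k,u)$ can displace $X-Z$ out of this neighborhood, so the second-order Taylor remainder of $\phi_{n}$ (resp.\ $\phi_{n}$ of the squared distance) is not directly estimable. This is exactly where the structural hypotheses enter: for $d=1$, either the monotonicity \eqref{eq1-1d-c-fn-path-cdn} of $x\mapsto x+c(x,k,u)$ or the non-degeneracy \eqref{eq2-1d-c-fn-path-cdn} keeps the mean-value point comparable to $X-Z$, so that $\phi_{n}''$ is evaluated where the estimate is valid; for $d\ge2$, the scaling inequality $\varrho(r)\le(1+r)^{2}\varrho(r/(1+r))$ in \eqref{eq-varrho-conditions-Feller} allows $\varrho(|X-Z+\gamma|^{2})$ to be compared with $\varrho(|X-Z|^{2})$. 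Handling this remainder carefully, rather than the otherwise routine drift and diffusion estimates, is the crux of the argument.
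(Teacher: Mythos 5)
Your proposal is correct and follows essentially the same route as the paper: Yamada--Watanabe mollifiers plus Bihari's inequality for pathwise uniqueness when $d=1$ (with the structural hypotheses \eqref{eq1-1d-c-fn-path-cdn} or \eqref{eq2-1d-c-fn-path-cdn} controlling the jump remainder, exactly as you identify), the $d\ge 2$ uniqueness and the weak existence handled by the approximation/martingale-problem arguments that the paper outsources to \cite{XiZ-18b} and \cite{LiPu-12}, and the Lyapunov function $\Phi$ of \eqref{eq-fn-Phi} with $\LL_{k}\Phi\le H\Phi$ for non-explosion. The one step to make explicit when executing the $d=1$ argument is that, besides $\tau_{R}$, you must also stop at $S_{\delta_{0}}:=\inf\{t\ge 0:|X(t)-Z(t)|\ge\delta_{0}\}$ so that the near-diagonal conditions \eqref{eq1-1d path-cdn}--\eqref{eq2-1d path-cdn} are applicable inside the It\^o expansion, and then show separately that $\P\{S_{\delta_{0}}\le t\}=0$; this is precisely how the paper resolves the restriction you correctly flag as the main obstacle.
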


 \begin{proof}
Condition \eqref{eq-coeffs-non-linear-growth} guarantees that the solution to \eqref{eq:X-k sde} will not explode in finite time with probability 1; see, for example, Theorem 2.1 in \cite{XiZ-18b}. When $d\ge2$, the existence and uniqueness of a strong solution to \eqref{eq:X-k sde} under Assumption \ref{assumption-non-lip}   follows from  Theorem 2.8 of \cite{XiZ-18b}.

 When $d =1$, we
 follow the arguments in the proof of Theorem 3.2 of \cite{LiPu-12} to show that  pathwise uniqueness holds for \eqref{eq:X-k sde}. First, let $\{a_{n}\}$ be a strictly decreasing sequence of real numbers satisfying $a_{0}=1$,  $\lim_{n\to\infty}a_{n} =0$, and $\int_{a_{n}}^{a_{n-1}} \frac{\d r}{r} = n$ for each $n \ge 1$.
For each $n \ge 1$, let $\rho_{n}$ be a nonnegative continuous function with support on $(a_{n}, a_{n-1})$ so that
\begin{displaymath}
\int_{a_{n}}^{a_{n-1}}\rho_{n}(r) \d r =1 \text{ and } \rho_{n}(r) \le 2(kr)^{-1} \text{ for all }r > 0.
\end{displaymath}
For $x\in \R$, define \begin{equation}
\label{eq-fn psi-n}
\psi_{n}(x) = \int_{0}^{|x|} \int_{0}^{y}\rho_{n}(z) \d z\d y.
\end{equation} We can immediately verify that $\psi_{n}$ is even and twice continuously differentiable, with
\begin{equation}\label{eq sgn of psi'}
\psi_{n}'(r) =
\sgn(r) \int_{0}^{|r|} \rho_{n}(z) \d z =\sgn(r) |\psi_{n}'(r)|,
\end{equation} and \begin{equation}
\label{eq-psi estimates}
 |\psi_{n}'(r)| \le 1,\quad  0 \le |r| \psi_{n}''(r) = |r| \rho_{n}(|r|) \le \frac2n,\quad  \text{and}\quad\lim_{n\to\infty} \psi_{n}(r) = |r|
\end{equation} for $r\in \R$. Furthermore, for each $r > 0$, the sequence $\{\psi_{n}(r) \}_{n\ge 1}$ is nondecreasing. Note also that for each $n\in \N$,  $\psi_{n}$, $\psi_{n}'$, and $\psi_{n}''$ all vanish  on the interval $(-a_{n}, a_{n})$.
Moreover the classical arguments reveal that
\begin{align}\label{eq-nonlocal term estimate}
\frac12&  \psi_{n}'' (x-z) |\sigma(x,k)-\sigma(z,k)|^{2} + \nonumber\int_{U}  [\psi_{n}(x-z + c(x,k,u)- c(z,k,u))\\ \nonumber& \qquad \qquad \qquad - \psi_{n}(x-z)- \psi_{n}' (x-z) ( c(x,k,u)- c(z,k,u))]\nu(\d u)
\\ & \le \frac12 \cdot \frac{2}{n}  \kappa_{R} +\frac{\kappa_{R}}{n} \bigg(\frac1\beta \vee 2\bigg) \le K \frac{\kappa_{R}}{n}, \end{align}
   for all $x,z$   with  $|x| \vee |z| \le R$
 and  $0<  |x-z|  \le \delta_{0}$, where $K $ is a positive constant independent of $R$ and $n$. On the other hand,   for any $x,z \in \R$ with $|x| \vee |z| \le R$ and $|x-z| \le \delta_{0}$, it follows    from \eqref{eq1-1d path-cdn} and \eqref{eq sgn of psi'} that  \begin{equation}
\label{eq-b difference}\begin{aligned}
\psi_{n}'(x-z) (b(x,k)- b(z,k)) & = \sgn(x-z) |\psi_{n}'(x-z)| (b(x,k)- b(z,k)) \\ & \le \kappa_{R} \varrho(|x-z|).
\end{aligned}\end{equation}

 Let $\wdt X$ and $X$ be two solutions to \eqref{eq:X-k sde}.  Denote $\Delta _{t} : = \wdt X(t) - X(t)$ for $t \ge 0$. Assume $|\Delta_{0}| = |\wdt x-x| < \delta_{0} $ and define $$S_{\delta_{0}}: = \inf\{t \ge 0: |\Delta_{t}| \ge \delta_{0}\}= \inf\{t \ge 0: |\wdt X(t)- X(t)| \ge \delta_{0}\}.$$
 For $R >0$, let $\tau_{R}: =\inf\{t \ge 0: |\wdt X(t)| \vee |X(t)| > R \}$. Then $\tau_{R} \to \infty$ a.s. as $R\to \infty$. Moreover, by It\^o's formula, we have
 \begin{align*}
\E [  \psi_{n}(\Delta_{t\wedge S_{\delta_{0}} \wedge\tau_{R}})]  & =  \psi_{n}(|\Delta _{0} |) + \E\biggl[\int_{0}^{t\wedge \tau_{R} \wedge S_{\delta_{0}}}\bigg\{ \psi_{n}' (\Delta_{s})[b(\wdt X(s),k)- b(X(s), k)]      \\
    &   \qquad + \frac12\psi_{n}'' (\Delta_{s})[\sigma(\wdt X(s),k)- \sigma(X(s), k)]^{2}\\
    & \qquad  + \int_{U} [\psi_{n}(\Delta_{s} + c(\wdt X(s), k, u)- c(\wdt X(s), k, u) )-\psi_{n}(\Delta_{s})\\
    & \qquad\qquad  -\psi_{n}'(\Delta_{s})(c(\wdt X(s), k, u)- c(\wdt X(s), k, u) ))  ]\nu(\d u)\bigg\} \d s\bigg]. \end{align*}
    Furthermore, using   \eqref{eq-nonlocal term estimate} and \eqref{eq-b difference}, we obtain
     \begin{align*}
\E [  \psi_{n}(\Delta_{t\wedge S_{\delta_{0}} \wedge\tau_{R}})]
    & \le  \psi_{n}(|\Delta _{0} |) + \E\biggl[\int_{0}^{t\wedge \tau_{R} \wedge S_{\delta_{0}}} \bigg( \kappa_{R} \varrho (|\Delta_{s}|) + K \frac{\kappa_{R}}{n}\bigg) \d s\bigg]\\
    & \le  \psi_{n}(|\Delta _{0} |) +K \frac{\kappa_{R}}{n}t + \int_{0}^{t} \kappa_{R} \rho\big(\E[|\Delta_{s\wedge \tau_{R} \wedge S_{\delta_{0}} }|]\big)\d s,
\end{align*} where the second inequality follows from the concavity of $\varrho$ and Jensen's inequality. Upon passing to the limit as $n\to \infty$, we obtain from the third equation in \eqref{eq-psi estimates} and the monotone convergence theorem that
\begin{displaymath}
\E[|\Delta_{t\wedge S_{\delta_{0}} \wedge\tau_{R}}|] \le |\Delta_{0}| + \kappa_{R}\int_{0}^{t}  \rho(\E[|\Delta_{s\wedge \tau_{R} \wedge S_{\delta_{0}} }|])\d s.
\end{displaymath} When $\Delta_{0} =0$, Bihari's inequality then implies that $\E[|\Delta_{t\wedge\tau_{R}\wedge S_{\delta_{0}}}|]=0$. Hence by Fatou's lemma, we have $\E[|\Delta_{t\wedge S_{\delta_{0}}}|]=0$. This implies that  $\Delta_{t\wedge S_{\delta_{0}} } =0$ a.s.

On the set $\{S_{\delta_{0}} \le t\}$, we have  $|\Delta_{t\wedge S_{\delta_{0}} } | \ge \delta_{0}$. Thus it follows that $0 =\E [ |\Delta_{t\wedge S_{\delta_{0}} } |] \ge \delta_{0} \P\{ S_{\delta_{0}} \le t\}$. Then, we have $ \P\{ S_{\delta_{0}} \le t\} =0$ and hence $\Delta_{t} =0$ a.s.
 The desired pathwise uniqueness
 for \eqref{eq:X-k sde} then follows from the fact that $\wdt X$ and $X$ have right continuous sample paths. Next
 similar to the proof of Theorem 5.1 of \cite{LiPu-12},
 \eqref{eq:X-k sde} has a weak solution,
  which further yields that
  the existence and uniqueness of a non-explosive strong solution to \eqref{eq:X-k sde}.
  \qed
\end{proof}

\begin{thm}\label{thm-soln-existence-uniqueness}
Under Assumptions  \ref{assumption-non-linear-growth} and \ref{assumption-non-lip}, for any $(x,k)\in \R^{d}\times \ss$, the system given by \eqref{eq:X} and \eqref{eq:La-SDE} has a unique non-explosive strong solution $(X,\La)$ with initial condition $(X(0),\La(0)) = (x,k)$.
\end{thm}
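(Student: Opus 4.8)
The plan is to construct $(X,\La)$ by interlacing the diffusive motion between switches with the jumps of $\La$, using \lemref{lem-pathwise-unique} on each interval where $\La$ is frozen, and then to preclude explosion of the coupled system through a single Lyapunov function. First I would set $\tau_{0}=0$ and $(X(0),\La(0))=(x,k)$, and define the solution recursively: on the stochastic interval $[\tau_{n},\tau_{n+1})$ the switching component is held at $k_{n}:=\La(\tau_{n})$, and $X$ is the unique non-explosive strong solution of \eqref{eq:X-k sde} with parameter $k_{n}$ driven by the restrictions of $W$ and $\wdt N$, which exists by \lemref{lem-pathwise-unique}. The next switching time $\tau_{n+1}$ and the post-jump value $\La(\tau_{n+1})$ are then read off from the Poisson random measure $N_{1}$ via the intervals $\{\Delta_{k_{n}l}(X(s-))\}$ exactly as in \eqref{eq:La-SDE}; since the total exit rate satisfies $q_{k_{n}}(X(s))\le Hk_{n}<\infty$ by \eqref{q_k<Hk}, we have $\tau_{n+1}>\tau_{n}$ almost surely. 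This produces a pathwise-unique strong solution on $[0,\tau_{\infty})$ with $\tau_{\infty}:=\lim_{n}\tau_{n}$, and the crux is to show $\tau_{\infty}=\infty$ a.s.

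For non-explosion I would use the proper (norm-like) test function $V(x,k):=\Phi(x)+f(k)$, with $\Phi$ as in \eqref{eq-fn-Phi} and $f$ as in Assumption \ref{assumption-non-linear-growth}. Since \eqref{eq-zeta-sec-2} forces $\Phi(x)\to\infty$ as $|x|\to\infty$ and $f(k)\to\infty$ as $k\to\infty$, we have $V(x,k)\to\infty$ whenever $|x|+k\to\infty$, so controlling $V$ simultaneously controls the size of $X$ and of the switching state. The main computation is the bound $\A V\le C(1+V)$. Because $\Phi$ does not depend on $k$, \eqref{eq-Qx-operator} gives $Q(x)\Phi=0$; because $f(k)$ does not depend on $x$, \eqref{eq-Lk-operator-defn} gives $\LL_{k}f=0$; hence $\A V=\LL_{k}\Phi+Q(x)f$. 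The switching part is bounded directly by \eqref{eq:switching-2nd-moment-condition}, namely $Q(x)f(x,k)\le H(1+\Phi(x)+f(k))=H(1+V)$. For $\LL_{k}\Phi$ I would use that $\Phi=\exp\{\int_{0}^{|x|^{2}}(r\zeta(r)+1)^{-1}\d r\}$ is precisely the Khasminskii-type function tailored to \eqref{eq-coeffs-non-linear-growth}: the drift term equals $2\Phi(x)(|x|^{2}\zeta(|x|^{2})+1)^{-1}\lan x,b(x,k)\ran$, which by $2\lan x,b(x,k)\ran\le H(|x|^{2}\zeta(|x|^{2})+1)$ is at most $H\Phi(x)$, while the diffusion and jump second-order contributions are dominated, using $|\sigma(x,k)|^{2}+\int_{U}|c(x,k,u)|^{2}\nu(\d u)\le H(|x|^{2}\zeta(|x|^{2})+1)$ together with the monotonicity of $\zeta$, by a further constant multiple of $\Phi(x)$. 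This is the same estimate already invoked for the frozen equation via Theorem 2.1 of \cite{XiZ-18b}, now holding uniformly in $k$. Combining the two parts yields $\A V(x,k)\le C(1+V(x,k))$ with $C$ independent of $(x,k)$.

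To conclude non-explosion, set $\theta_{m}:=\inf\{t\ge0:V(X(t),\La(t))\ge m\}$. Applying It\^o's formula to $e^{-Ct}V(X(t),\La(t))$ up to $t\wedge\theta_{m}$ and taking expectations (the stochastic integrals being local martingales annihilated by the stopping) yields, after Gronwall's inequality, $\E[V(X(t\wedge\theta_{m}),\La(t\wedge\theta_{m}))]\le(V(x,k)+Ct)e^{Ct}$ for all $m$ and $t$. Since $\Phi\ge1$ gives $V\ge1$, one has $\E[V(X(t\wedge\theta_{m}),\La(t\wedge\theta_{m}))]\ge m\,\P\{\theta_{m}\le t\}$, so $\P\{\theta_{m}\le t\}\to0$ and $\theta_{m}\to\infty$ a.s. On the event $\{\tau_{\infty}<\infty\}$, explosion means $(X,\La)$ leaves every compact set of $\R^{d}\times\ss$ by time $\tau_{\infty}$; here \eqref{q_k<Hk} is essential, since while $\La$ stays in a finite set $\{1,\dots,M\}$ the jump rate is bounded by $HM$, so infinitely many jumps in finite time force $\La\to\infty$, whence $f(\La)\to\infty$ and $\sup_{t<\tau_{\infty}}V=\infty$. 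This would give $\theta_{m}<\tau_{\infty}<\infty$ for every $m$, contradicting $\theta_{m}\to\infty$; therefore $\tau_{\infty}=\infty$ a.s. Finally, since the recursive construction determines the solution uniquely from the driving data $(W,\wdt N,N_{1})$ — each frozen piece being unique by \lemref{lem-pathwise-unique} and the switching epochs being measurable functionals of $N_{1}$ and the $X$-path — pathwise uniqueness holds, and with the existence of a strong solution this yields the asserted unique non-explosive strong solution.

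The step I expect to be the main obstacle is the joint control of the two distinct explosion mechanisms within one Lyapunov estimate: the super-linear growth in $x$ (handled by the delicate choice of $\Phi$ and the resulting Khasminskii bound on $\LL_{k}\Phi$) must be reconciled with the possible blow-up of the countably valued switching component, whose non-explosion rests on coupling the rate bound \eqref{q_k<Hk} with the drift-type condition \eqref{eq:switching-2nd-moment-condition} through the cross term $\Phi(x)$ appearing on the right-hand side of \eqref{eq:switching-2nd-moment-condition}.
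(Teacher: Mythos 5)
Your construction and your non-explosion argument follow essentially the same route as the paper: interlacing the frozen equations \eqref{eq:X-k sde} between switching epochs, ruling out accumulation of switching times while $\La$ stays bounded via the rate bound \eqref{q_k<Hk}, and controlling the remaining explosion mechanisms with the Lyapunov function built from $\Phi$ and $f$. The paper uses $V(x,k)=1+\Phi(x)+f(k)$ with $\A V\le 2HV$ and organizes the discrete-component argument by conditioning on the events $A$ and $A^{c}$ and invoking $\sum_{n}\xi_{n}=\infty$ a.s., rather than folding everything into a single stopping-time estimate as you do, but the content of this half is the same, and your identification of the two coupled explosion mechanisms is exactly the point of conditions \eqref{q_k<Hk} and \eqref{eq:switching-2nd-moment-condition}.

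The gap is in pathwise uniqueness. You conclude it from the observation that the recursive construction is determined by the driving data $(W,\wdt N,N_{1})$, but that only shows the \emph{constructed} solution is unique; pathwise uniqueness requires showing that an \emph{arbitrary} solution $(\wdt X,\wdt\La)$ of \eqref{eq:X} and \eqref{eq:La-SDE} with the same noises and initial condition coincides with it. The difficulty is the mutual dependence of the two components: the active region $\bigcup_{l}\Delta_{kl}(\wdt X(s-))$ that decides which atoms of $N_{1}$ trigger a switch depends on the continuous component, so one cannot simply quote uniqueness of each frozen piece without first controlling the first time $\zeta$ at which the discrete components of two solutions could differ. The paper's Step 2 does exactly this: it proves $\E[|\wdt X(t\wedge\zeta)-X(t\wedge\zeta)|]=0$ using the non-Lipschitz estimates, and then bounds $\P\{\zeta\le t\}$ by $H\int_{0}^{t}\E[|\wdt X(s\wedge\zeta)-X(s\wedge\zeta)|^{\delta}]\,\d s=0$ using the H\"older condition \eqref{eq-q(x)-holder} --- a hypothesis of Assumption \ref{assumption-non-linear-growth} that your argument never invokes. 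To close the gap you would either have to reproduce this quantitative estimate, or carry out carefully the induction on switching epochs (up to the minimum of the two first switching times both continuous components solve the same frozen equation, hence agree by a localized version of Lemma \ref{lem-pathwise-unique}; hence the first switching times and post-jump states read off $N_{1}$ agree; and so on, together with $\tau_{n}\to\infty$). As written, the uniqueness step is asserted rather than proved.
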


\begin{proof} The proof is divided into two steps.
First,
we
show that \eqref{eq:X} and \eqref{eq:La-SDE} has a non-explosive solution. The second step then
derives the
pathwise uniqueness
for \eqref{eq:X} and \eqref{eq:La-SDE}.  While the proof of the existence of a solution to \eqref{eq:X} and \eqref{eq:La-SDE}
use the same line of arguments as
in
the proof of Theorem 2.1 of \cite{XiZ-17}, some care are required here since
the assumptions in \cite{XiZ-17} have been relaxed.
 Moreover, an error
in the proof of \cite{XiZ-17} is corrected here.
The proof for pathwise uniqueness 
is more delicate than that in \cite{XiZ-17} since the global Lipschitz conditions with respect to the variable $x$
in \cite{XiZ-17} are no longer true in this paper.

 {\em Step 1.}    Let
 $(\Omega, {\F}, \{{\F}_{t}\}_{t\ge 0}, \, \P )$ be a   complete filtered probability space, on which are defined
 a $d$-dimensional standard Brownian motion $B$,
 and a  Poisson random measure $N(\cdot, \cdot)$ on $[0, \infty) \times U$
  with  a $\sigma$-finite characteristic measure $\nu$ on $U$.
 In addition, let $\{\xi_{n}\}$ be a sequence of independent
 exponential random variables with mean $1$
 on $(\Omega, {\F}, \{{\F}_{t}\}_{t\ge 0}, \, \P )$
 that is independent of $B$ and $N$.

Let $k \in \ss$ and consider
 the stochastic differential equation
  \begin{equation}
\label{eq:X-k}  \begin{aligned}X^{(k)}(t) = x+ \int_{0}^{t}b(X^{(k)}(s),  k ) \d s +  \int_{0}^{t} \sigma(X^{(k)}(s), k)\d W(s) \\ +  \int_{0}^{t}\int_{U} c(X^{(k)}(s-), k, u)\wdt N(\d s, \d u).
\end{aligned}\end{equation}  
Lemma \ref{lem-pathwise-unique}  guarantees that
SDE \eqref{eq:X-k} has a unique non-explosive strong solution $X^{(k)}$.   As in the proof of Theorem 2.1 of \cite{XiZ-17}, we define   \begin{equation}
\label{eq-tau1-defn}
 \tau_{1}= \theta_{1}: = \inf\set{t\ge 0: \int_{0}^{t} q_{k}(X^{(k)}(s))\d s > \xi_{1}}.
\end{equation} 
Thanks to \eqref{q_k<Hk}, we have   $\P(\tau_{1} > 0) =1$.
We define a process $(X,\La) $ on $[0, \tau_{1}]$ as
\begin{displaymath}
 X(t) = X^{(k)}(t) \text{ for all } t \in [0, \tau_{1}], \text{ and }\La(t)  =    k   \text{ for all } t \in [0, \tau_{1}).
\end{displaymath}
   Moreover, we define $\La(\tau_{1})\in \ss$ according to the probability distribution \begin{equation}\label{eq-switching-to-location}
 \P\set{\La(\tau_{1}) = l| \F_{\tau_{1}-}} = \dfrac{q_{kl}(X(\tau_{1}-))}{q_{k}(X(\tau_{1}-))} (1- \delta_{kl}) \one_{\{q_{k}(X(\tau_{1}-)) > 0  \}} + \delta_{kl} \one_{\{q_{k}(X(\tau_{1}-)) = 0  \}},
 \end{equation} for $l \in \ss.$  In general, having determined $(X,\La)$ on $[0, \tau_{n}]$, we let
 \begin{equation}
 \label{eq-theta-n+1-defn}
\theta_{n+1}: = \inf\biggl\{t\ge 0: \int_{0}^{t} q_{\La(\tau_{n})}(X^{(\La(\tau_{n}))}(s))\d s > \xi_{n+1}\biggr\}, \end{equation}   where
 \begin{displaymath}
\begin{aligned}
   X^{(\La(\tau_{n}))}(t)  & : =   X(\tau_{n}) +
\displaystyle \int_{0}^{t} \sigma (X^{(\La(\tau_{n}))}(s),\La(\tau_{n}))\d B(s)+ \int_{0}^{t} b(X^{(\La(\tau_{n}))}(s),\La(\tau_{n}))\d s
  \\ & \qquad+\displaystyle \int_{0}^{t} \int_{U} c(X^{(\La(\tau_{n}))}(s-),\La(\tau_{n}),u)
\wdt{N}(\d s,\d u).\end{aligned}
\end{displaymath} As before,  \eqref{q_k<Hk}    implies that $\P\{\theta_{n+1} > 0 \} =1$.
Then we let \begin{equation}
\label{eq-tau-n+1-defn}
 \tau_{n+1} : = \tau_{n} + \theta_{n+1}
\end{equation} and define $(X,\La)$ on $[\tau_{n}, \tau_{n+1}]$ by
\begin{align}\label{eq-XLa-nth-segment}
X(t) =  X^{(\La(\tau_{n}))}(t -\tau_{n})  \text{ for } t\in  [\tau_{n}, \tau_{n+1}], \, \, \La(t) = \La(\tau_{n})  \text{ for } t\in  [\tau_{n}, \tau_{n+1}), \
\end{align}
and
\begin{equation}\label{eq-sw-n-mechanism}\begin{aligned}  \P \set{\La(\tau_{n+1}) = l| \F_{\tau_{n+1}-}}    & = \delta_{\La(\tau_n),l} \one_{\{q_{\La(\tau_n)}(X(\tau_{n+1}-)) = 0  \}}\\ & +  \dfrac{q_{\La(\tau_n),l}(X(\tau_{n+1}-))}{q_{\La(\tau_n)}(X(\tau_{n+1}-))} (1- \delta_{\La(\tau_n),l})  \one_{\{q_{\La(\tau_n)}(X(\tau_{n+1}-)) > 0  \}} .
\end{aligned} \end{equation}
As argued in \cite{XiZ-17}, this ``interlacing procedure''
  uniquely determines a solution $(X,\La)\in \R^{d}\times \ss$ to  \eqref{eq:X} and \eqref{eq:La-SDE} for all $t \in[0,\tau_{\infty})$, where
 \begin{equation}
\label{eq-tau-infty-defn}
 \tau_{\infty}=\lim_{n\to \infty}\tau_n.
\end{equation} Since the sequence $\tau_{n}$ is strictly increasing, the limit $\tau_{\infty} \le \infty$ exists.

Next we
show that $\tau_{\infty} = \infty$ a.s.  To this end, fix $(X(0),\La(0)) = (x,k)\in \R^{d}\times \ss$ as in Step 1 and for any $m \ge k+1$, denote by $\wdt{\tau}_m :=\inf \{t\ge 0: \La(t)\ge m\}$ the first exit time for the $\La$ component from the finite set $\{0, 1, \dots, m-1\}$.  Let $A^{c}: =\{ \omega\in \Omega: \tau_{\infty} > \wdt \tau_{m} \text{  for all }m \ge k+ 1\}$ and $A : = \{ \omega\in \Omega: \tau_{\infty} \le \wdt \tau_{m_{0}} \text{ for some }m_{0} \ge k+1\}$. Then we have
  \begin{equation}
\label{eq-tau-infty=infty}
\P\{\tau_{\infty} = \infty\} = \P\{ \tau_{\infty} = \infty |A^{c}\} \P(A^{c}) + \P\{ \tau_{\infty} = \infty| A\} \P(A) .
\end{equation} 

Let $A_{m}:=\big\{\omega\in \Omega: \tau_{\infty} \le \wdt{\tau}_{m}\big\}$ for $m\ge k+1$. Then $A=\bigcup_{m=k+1}^{\infty}A_{m},  $ and $A^{c}=\bigcap_{m=k+1}^{\infty}A_{m}^{c}.$ Also denote $B_{k+1}:=A_{k+1}$ and let $$B_{m}:=A_{m}\setminus A_{m-1}=\big\{\omega\in \Omega: \wdt{\tau}_{m-1} <\tau_{\infty} \le \wdt{\tau}_{m}\big\}$$ for $m\ge k+2$. Clearly, $\{B_{m}\}_{m=k+1}^{\infty}$ is a sequence of disjoint sets and we have
\begin{equation}\label{Adisjointsum} A:=\bigcup_{m=k+1}^{\infty}B_{m}.\end{equation}

We proceed to
show that $\P\{ \tau_{\infty} = \infty| B_{m}\} =1$ for each $m$. 
   Note that  on the set $B_{m}$, $\La(\tau_{n}) \le m$ for all $n =1, 2,\dots$
Consequently, using
\eqref{q_k<Hk} in Assumption \ref{assumption-non-linear-growth}, we have $q_{\La(\tau_{n})}(X^{(\La(\tau_{n}))}(s)) \le Hm$ for all $n$ and $s\ge 0$.  On the other hand, thanks to the definition of $\theta_{1}$ in \eqref{eq-tau1-defn}, for any $\e > 0$, we have \begin{displaymath}
\xi_{1} < \int_{0}^{\theta_{1}+ \e} q_{k}(X^{(k)}(s)) \d s.  
\end{displaymath} Consequently, it follows that \begin{displaymath}
\one_{B_{m}}\xi_{1} \le\one_{B_{m}} \int_{0}^{\theta_{1}+ \e} q_{k}(X^{(k)}(s)) \d s \le \one _{B_{m}}  Hm (\theta_{1}+ \e).
\end{displaymath}
In the same manner, we have from \eqref{eq-theta-n+1-defn} that  \begin{displaymath}
\xi_{n} < \int_{0}^{\theta_{n}+ \e/2^{n}} q_{\La(\tau_{n-1})}(X^{(\La(\tau_{n-1}))}(s))\d s, \end{displaymath} and hence \begin{displaymath}
\one_{B_{m}} \xi_{n}  \le \one_{B_{m}}\int_{0}^{\theta_{n}+ \e/2^{n}} q_{\La(\tau_{n-1})}(X^{(\La(\tau_{n-1}))}(s))\d s \le  \one_{B_{m}}   Hm (\theta_{n}+ \e/2^{n}), \ \ \forall n =1, 2,\dots
\end{displaymath}
Summing over these inequalities and noting that $\tau_{\infty}= \sum_{n=1}^{\infty} \theta_{n}$, we arrive at
\begin{equation}\label{ineq-tau-infty-observation}
\one_{B_{m}} \sum_{n=1}^{\infty} \xi_{n} \le \one_{B_{m}} Hm (\tau_{\infty} + 2 \e).
\end{equation} By virtue of
Theorem 2.3.2 of \cite{Norris-98}, we have  $\sum_{n=1}^{\infty} \xi_{n}  = \infty$ a.s. Therefore it follows that
 $\P(\sum_{n=1}^{\infty} \xi_{n}  = \infty|B_{m}) =1$. Then \eqref{ineq-tau-infty-observation} implies that \begin{displaymath}
\P\{\tau_{\infty}  = \infty|B_{m} \} \ge \P\Bigg\{ \sum_{n=1}^{\infty} \xi_{n}  = \infty\big |B_{m}\Bigg\}  =1,
\end{displaymath} as desired. Consequently, we can use \eqref{Adisjointsum} to compute
\begin{align}\label{eq-t-tau-infty=infty|A}
\nonumber \P\{ \tau_{\infty} = \infty| A\}    &=  \frac{\P\{\tau_{\infty} = \infty, A \}}{ \P(A)} =  \frac{\P\{\tau_{\infty} = \infty, \bigcup_{m=k+1}^{\infty}B_{m} \}}{ \P(A)}  \\ \nonumber &= \frac{ \sum_{m=k+1}^{\infty}\P\{\tau_{\infty} = \infty, B_{m} \} }{\P(A)}
       =\frac{ \sum_{m=k+1}^{\infty}\P\{\tau_{\infty} = \infty| B_{m} \} \P(B_{m}) }{\P(A)} \\ & =    \frac{ \sum_{m=k+1}^{\infty}  \P(B_{m}) }{\P(A)}=1.
\end{align}

If $\P(A) =1$ or $\P(A^{c}) =0$, then \eqref{eq-tau-infty=infty} and \eqref{eq-t-tau-infty=infty|A} imply that $\P\{\tau_{\infty} = \infty \} =1$ and the proof is complete. Therefore, it remains to consider the case when $\P(A^{c}) >0$.  Denote $\wdt\tau_{\infty}: =\lim_{m\to \infty}\wdt \tau_{m}$.  Note that $A^{c}= \{ \tau_{\infty} \ge \wdt \tau_{\infty}\}$. Thus $\P\{\tau_{\infty} = \infty |A^{c} \}\ge \P\{\wdt \tau_{\infty} = \infty |A^{c} \}$ and hence \eqref{eq-tau-infty=infty}
holds if we can show  that \begin{equation}
\label{eq-tilde-tau-infty=infty}
\P\{\wdt \tau_{\infty} = \infty |A^{c} \} =1.
\end{equation} Assume on the contrary that \eqref{eq-tilde-tau-infty=infty}
were false, then there would exist a $T > 0$ such that
\begin{displaymath}
\delta: = \P\{\wdt \tau_{\infty} \le T, A^{c} \} > 0.
\end{displaymath}
Let $f:\ss\mapsto \R_{+}$ be as in Assumption \ref{assumption-non-linear-growth}.  Then we have for any $m \ge k+1$,
\begin{align*}
 f(k) & = \E[e^{-H(T\wedge \tau_{\infty}\wedge \wdt \tau_{m})} f(\La(T\wedge \tau_{\infty}\wedge \wdt \tau_{m}))]  \\
               & \quad + \E\biggl[ \int_{0}^{T\wedge \tau_{\infty}\wedge \wdt \tau_{m}} e^{-Hs}\biggl(H f(\La(s)) - \sum_{l\in \ss} q_{\La(s), l}(X(s)) [f(l) - f(\La(s))] \biggr) \d s\biggr] \\
 &  \ge \E[e^{-H(T\wedge \tau_{\infty}\wedge \wdt \tau_{m})} f(\La(T\wedge \tau_{\infty}\wedge \wdt \tau_{m}))]  \\
               & \quad +  \E\biggl[ \int_{0}^{T\wedge \tau_{\infty}\wedge \wdt \tau_{m}} e^{-Hs} [H f(\La(s)) - H (1+\Phi( X(s)) + f(\La(s))) ] \d s \biggr]  \\
               & \ge  \E[e^{-H(T\wedge \tau_{\infty}\wedge \wdt \tau_{m})} f(\La(T\wedge \tau_{\infty}\wedge \wdt \tau_{m}))],
\end{align*} where the  first inequality above follows from \eqref{eq:switching-2nd-moment-condition} in Assumption  \ref{assumption-non-linear-growth}.
Consequently, we have
\begin{equation}
\label{eq-exp-HT-f(k)}
\begin{aligned}
 e^{HT} f(k)& \ge   \E[ f(\La(T\wedge \tau_{\infty}\wedge \wdt \tau_{m}))] \ge \E[f(\La(\wdt\tau_{m})) \one_{\{\wdt\tau_{m} \le T\wedge \tau_{\infty}\}}] \\ & \ge f(m) \P\{\wdt\tau_{m} \le T\wedge \tau_{\infty}\}   \ge f(m) \P\{\wdt\tau_{m} \le T\wedge \tau_{\infty}, A^{c}\} \\ & \ge  f(m) \P\{\wdt\tau_{\infty} \le T\wedge \tau_{\infty}, A^{c}\},
\end{aligned}
\end{equation}
where the third inequality follows from the facts that $\La(\wdt \tau_{m}) \ge m$ and that $f $ is nondecreasing, and the last inequality follows from the fact that $\wdt \tau_{m} \uparrow \wdt \tau_{\infty} $.  Recall that $A^{c} = \{\tau_{\infty} \ge \wdt \tau_{\infty} \}$. Thus \begin{align*}
 \P\{\wdt\tau_{\infty} \le T\wedge \tau_{\infty}, A^{c}\} &= \P \{\wdt\tau_{\infty} \le T\wedge \tau_{\infty}, \wdt \tau_{\infty} \le   \tau_{\infty}  \} \\ &\ge \P \{\wdt\tau_{\infty} \le T, \wdt \tau_{\infty} \le \tau_{\infty} \} = \P\{\wdt\tau_{\infty} \le T, A^{c} \} = \delta> 0.
\end{align*}
Using this observation in \eqref{eq-exp-HT-f(k)} yields $\infty > e^{HT} f(k) \ge f(m) \delta \to \infty$ as $m\to \infty$, thanks to the fact that $f(m) \to \infty$ as $m\to \infty$, which
is a contradiction.
This establishes \eqref{eq-tilde-tau-infty=infty} and hence $\P(\tau_{\infty} = \infty) =1$. In other words, the interlacing procedure uniquely determines a solution $(X,\La) = (X^{(x,k)},\La^{(x,k)})$ for all $t\in [0,\infty)$.

Next we show that the solution $(X,\La)$ to the system \eqref{eq:X} and \eqref{eq:La-SDE} is
non-explosive a.s.
Consider the function $V(x,k) : = 1 + \Phi(x) + f(k)$, where the functions $\Phi: \R^{d}\mapsto \R_{+} $ of \eqref{eq-fn-Phi} and $f: \ss\mapsto \R_{+}$ are defined in Assumption \ref{assumption-non-linear-growth}. Note that  $V(x,k) \to \infty$ as $|x|\vee k \to \infty$ thanks to  Assumption \ref{assumption-non-linear-growth}. Using the definition of $\A$ of \eqref{eq-operator}, we have $$\A V(x,k) = \LL_{k} \Phi(x) + Q(x) f(k) .$$  Moreover, detailed computations using \eqref{eq-zeta-sec-2} and \eqref{eq-coeffs-non-linear-growth} reveal that $\LL_{k}\Phi(x) \le H  \Phi(x)$ for all $x\in \R^{d}$ and $k\in \ss$. On the other hand, \eqref{eq:switching-2nd-moment-condition} implies that  $ Q(x) f(k) \le H(1+ \Phi(x) + f(k))$. Combining these estimates, we obtain $\A V(x,k) \le 2 H V(x,k)$.  This, together with It\^o's formula,  shows that the process $\{e^{-2 H t} V(X(t), \La(t)), t \ge 0\}$ is a nonnegative local supermartingale.
  Then we can apply  the optional sampling theorem to the process $\{e^{-2 H t} V(X(t), \La(t)), t \ge 0\}$ to argue that $\P\{ \lim_{n\to\infty} T_{n} =\infty\} =1$, where $T_{n}: = \inf\{t \ge 0: |X(t)| \vee \La(t) \ge n \}$.  This shows that the solution $(X,\La)$  has no finite explosion time a.s.

{\em Step 2.} 
Suppose $(X,\La)$ and $(\wdt X, \wdt \La)$ are two solutions to
\eqref{eq:X} and \eqref{eq:La-SDE} starting from the same initial condition $(x,k) \in \R^{d}\times \ss$. Then we have
\begin{align*}
 \wdt X(t) - X(t) & = \int_{0}^{t} [b(\wdt X(s),\wdt\La(s)) - b(X(s),\La(s))]\d s  \\ & \quad
 + \int_{0}^{t}  [\sigma(\wdt X(s),\wdt\La(s)) - \sigma(X(s),\La(s))]\d W(s) \\ & \quad + \int_{U} [c(\wdt X(s-),\wdt \La(s-), z) - c(X(s-),\La(s-),z)] \wdt N(\d s, \d u), \end{align*}
   and
\begin{align*}  \wdt \La(t) - \La(t) =\int_{0}^{t} \int_{\R_{+}}[h(\wdt X(s-),\wdt \La(s-), z) - h(X(s-),\La(s-),z)] N_{1} (\d s, \d z).
\end{align*}

Let $\zeta:=\inf\{ t \ge 0: \La(t) \neq \wdt \La(t)\}$ be the first time when the discrete components differ from each other. Let us also  define $T_{R} : =\inf\{t \ge 0: |\wdt X(t)| \vee |X(t)| \vee \wdt \La(t) \vee \La(t) \ge R\}$ for $R > 0$ and   $S_{\delta_{0}}: = \inf\{t \ge 0: |\wdt X(t)- X(t)|  \ge \delta_{0} \}$. We have $\La(t) = \wdt \La(t)$ for $t \in [0, \zeta)$. To simplify notation, let us define $\Delta_{t}: = \wdt X(t)- X(t)$. Then from the proof of Theorem 2.6 of \cite{XiZ-18b}, we have  $\E [H( |\Delta_{t \wedge \zeta\wedge S_{\delta_{0}}} |)] =0$ for $d \ge 2$, where $H(r) : = \frac{r^{2}}{1+r^{2}}$, $r\ge 0$.  When $d=1$, the proof of Lemma \ref{lem-pathwise-unique}  reveals that $\E[|\Delta_{t \wedge \zeta\wedge S_{\delta_{0}}} |] =0$ and hence $\E [H( |\Delta_{t \wedge \zeta\wedge S_{\delta_{0}}} |)] =0$.  Note that on the set $\{ S_{\delta_{0} } \le t \wedge \zeta\}$, $ |\Delta_{t \wedge \zeta\wedge S_{\delta_{0}}} | \ge \delta_{0}$. Also we can readily check that $H$ is an increasing function. Thus, it follows that \begin{align*}
0    & = \E [H( |\Delta_{t \wedge \zeta\wedge S_{\delta_{0}}} |)]  \ge \E[H( |\Delta_{t \wedge \zeta\wedge S_{\delta_{0}}} |) \one_{\{ S_{\delta_{0} } \le t \wedge \zeta\}} ] \ge H(\delta_{0}) \P\{S_{\delta_{0} } \le t \wedge \zeta \}.   \end{align*}
This implies that $\P\{S_{\delta_{0} } \le t \wedge \zeta \}  =0$. Consequently, we have \begin{align*}
  \E[H(|\Delta_{t \wedge \zeta  }|)]  & =      \E[H(|\Delta_{t \wedge \zeta} |)  \one_{\{ S_{\delta_{0} } \le t \wedge \zeta\}}]  +    \E[H(|\Delta_{t \wedge \zeta}|)  \one_{\{ S_{\delta_{0} } > t \wedge \zeta\}}]    \\
    &   \le \P\{S_{\delta_{0} } \le t \wedge \zeta \}  + \E[H( |\Delta_{t \wedge \zeta\wedge S_{\delta_{0}}} |) \one_{\{ S_{\delta_{0} } > t \wedge \zeta\}}]\\
    & \le 0.
\end{align*}
It follows that $\E [ |\Delta_{t \wedge \zeta }|] =\E [ |\wdt X(t\wedge \zeta) - X(t\wedge \zeta)|  ] =0$.   Then we have  \begin{equation}
\label{eq-difference-1st-moment}
 \E [ |\wdt X(t\wedge \zeta) - X(t\wedge \zeta)|^{\delta} ]= 0,
\end{equation} where $\delta \in (0, 1]$ is the H\"{o}lder constant in \eqref{eq-q(x)-holder}.

Note that $\zeta \le t $ if and only if $\wdt \La(t\wedge \zeta) - \La(t\wedge \zeta)\neq 0$. Therefore, it follows that
\begin{align*}
\P&\{\zeta \le t\}    =\E[\one_{ \{\wdt \La(t\wedge \zeta) - \La(t\wedge \zeta)\neq 0\}}]   \\
 & = \E \biggl[\int_{0}^{t\wedge \zeta} \int_{\R_{+}} (\one_{\{\wdt \La(s-) - \La(s-) + h(\wdt X(s-), \La(s-), z) - h(X(s-),\La(s-), z) \neq 0 \}}  \\ & \qquad \qquad\qquad\qquad-  \one_{\{\wdt \La(s-) - \La(s-)\neq 0 \}})  \m(\d z) \d s\biggr] \\
 & = \E\biggl[\int_{0}^{t\wedge \zeta} \int_{\R_{+}}  \one_{\{h(\wdt X(s-), \La(s-), z) - h(X(s-),\La(s-), z) \neq 0 \}} \m(\d z) \d s\biggr] \\
 & \le  \E\biggl[\int_{0}^{t\wedge \zeta} \sum_{l \in \ss, l \neq \La(s-)} | q_{\La(s-),l}(\wdt X(s-)) - q_{\La(s-),l}(X(s-))| \d s\biggr]\\
 & \le H \E \biggl[\int_{0}^{t\wedge \zeta} | \wdt X(s-) ) -  X(s-) |^{\delta} \d s\biggr]  \le H \int_{0}^{t} \E[|\wdt X(s\wedge \zeta) - X(s\wedge \zeta) |^{\delta}] \d s =0,
\end{align*}  where the second inequality follows from \eqref{eq-q(x)-holder}.
In particular, we have
\begin{equation}
\label{eq-wdtLA=LA}
   \E[\one_{\{\wdt \La(t) \neq \La(t) \}}] \le \P\{\zeta \le t \}=0.
\end{equation}
Now we can compute
\begin{align*}
\E[H( |\wdt X(t) - X(t) |) ] & =\E[ H( |\wdt X(t) - X(t) | )\one_{\{\zeta > t \}}] + \E[H( |\wdt X(t) - X(t) |)\one_{\{\zeta \le t \}}]   \\
 & =\E[H( |\wdt X(t\wedge \zeta) - X(t\wedge \zeta) |) \one_{\{\zeta > t \}}] + \E[ 1 \cdot \one_{\{\zeta \le t \}}]\\
 & \le \E[ H(|\wdt X(t\wedge \zeta) - X(t\wedge \zeta) |) ] + 0 \\
 & =0.
\end{align*}
Thus $\P\{ \wdt X(t) = X(t)  \} =1$. This, together with \eqref{eq-wdtLA=LA}, implies that    $\P \{ (\wdt X(t), \wdt\La(t))=  ( X(t),\La(t))\} =1$ for all $t\ge 0$. Since the sample paths of $(X,\La)$ are right continuous,  we obtain the desired pathwise uniqueness result. \qed \end{proof}


\begin{exm}\label{exm1}{\rm
Let us consider the following SDE \begin{equation}
\label{eq-example1}\begin{aligned}
\d X(t) =&\ b(X(t),\La(t))\d t + \sigma(X(t), \La(t))\d W(t)   \\ & \quad  + \int_{U} c(X(t-),\La(t-), u) \wdt N(\d t, \d u), \ \  X(0) = x \in \R^{3},
\end{aligned}\end{equation} where $W$ is a 3-dimensional standard Brownian motion, $\wdt N(\d t,\d u)$ is a compensated Poisson random measure with compensator $\d t\,\nu(\d u)$ on $[0,\infty)\times U$, in which $U=  \{u\in \R^{3}: 0 < |u| <1 \}$ and $\nu(\d u) : = \frac{\d u}{|u|^{3+\alpha}}$ for some $\alpha\in (0,2)$. The $\La$ component in \eqref{eq-example1} takes value in $\ss= \{1,2,\dots\}$ and is generated by $Q(x) = (q_{kl}(x))$, with $q_{kl} (x) = \frac{k}{2^{l}}\cdot \frac{|x|^{2}}{1+|x|^{2}}$ for $x \in \R^{3}$ and $k \neq l \in \ss$. Let $q_{k}(x) = - q_{kk}(x) = \sum_{l\neq k} q_{kl}(x)$.
The coefficients of \eqref{eq-example1} are given by
\begin{displaymath}
b(x,k)=\begin{pmatrix}-x_{1}^{1/3}-k x_{1}^{3}\\ -x_{2}^{1/3}-k x_{2}^{3}\\-x_{3}^{1/3}-k x_{3}^{3}\\ \end{pmatrix}\!, \quad
c(x,k,u)=c(x,u)= \begin{pmatrix} \gamma x_{1}^{2/3} |u| \\  \gamma x_{2}^{2/3} |u|\\ \gamma x_{3}^{2/3} |u| \end{pmatrix}\!,
\end{displaymath} and 
\begin{displaymath}
\sigma(x,k) = \begin{pmatrix}\frac{x_{1}^{2/3}}{\sqrt 2} + 1 & \frac{\sqrt{ k}\, x_{2}^{2}}{3} & \frac{\sqrt{ k}\, x_{3}^{2}}{3}\\    \frac{\sqrt{ k}\, x_{1}^{2}}{3} &\frac{x_{2}^{2/3}}{\sqrt 2} + 1& \frac{\sqrt{ k}\, x_{3}^{2}}{3} \\ \frac{\sqrt{ k}\, x_{1}^{2}}{3} & \frac{\sqrt{ k}\, x_{2}^{2}}{3} & \frac{x_{3}^{2/3}}{\sqrt 2} + 1& \end{pmatrix}\!,
\end{displaymath} in which $\gamma$ is a positive constant so that $\gamma^{2} \int_{U} |u|^{2} \nu(\d u) = \frac12$.

Note that $\sigma$ and $b$ grow very fast in the neighborhood of $\infty$ and they
are H\"older continuous with orders $\frac23$
and $ \frac13$, respectively.
Nevertheless,   the coefficients of \eqref{eq-example1} still satisfy  Assumptions  \ref{assumption-non-lip}  and  \ref{assumption-non-linear-growth} and hence a unique non-exploding strong solution of \eqref{eq-example1} exists.  The verifications of these assumptions are as follows.
\begin{align*}
 \nonumber2& \lan x, b(x,k )\ran +|\sigma(x,k)|^{2} + \int_{U} |c(x,k,u)|^{2} \nu(\d u)        \\
  \nonumber  &    = 2 \sum_{j=1}^{3} x_{j}\bigl(-x_{j}^{1/3}-k x_{j}^{3}\bigr)  + \sum_{j=1}^{3} \biggl(\frac12 x_{j}^{4/3} + \frac{2k}{9} x_{j}^{4} +\sqrt 2 x_{j}^{2/3}+ 1\biggr)\\ \nonumber& \qquad + \int_{U} \gamma^{2} |u|^{2} \sum_{j=1}^{3} x_{j}^{4/3} \nu(\d u)\\
    & = - \frac{16k}{9}  \sum_{j=1}^{3} x_{j}^{4} - \sum_{j=1}^{3} x_{j}^{4/3} + \sqrt 2 \sum_{j=1}^{3} x_{j}^{2/3}  + 3. 
\end{align*}
 Thus \eqref{eq-coeffs-non-linear-growth} of Assumption \ref{assumption-non-linear-growth} hold. Furthermore, \eqref{q_k<Hk} is trivially satisfied. Consider the function $f(l) =  l$, $l\in \ss$. We have \begin{displaymath}
\sum_{l\neq k} (f(l) - f(k)) q_{kl}(x) = \sum_{l\neq k}( l-k) \frac{k}{2^{l}} \frac{|x|^{2}}{1+|x|^{2}}\le \sum_{l\neq k} l \frac{k}{2^{l}} \frac{|x|^{2}}{1+|x|^{2}} \le k \sum_{l \in \ss} \frac{l}{2^{l}} = 2 k,
\end{displaymath}
which yields
\eqref{eq:switching-2nd-moment-condition}. If $x, y \in \R^{3}$, we 
obtain
\begin{align*}
 \sum_{l\neq k} |q_{kl}(x) -q_{kl}(y) |   & = \sum_{ l\neq k} \frac{l}{2^{k}} \bigg| \frac{|x|^{2}}{1+ |y|^{2}} - \frac{|x|^{2}}{1+ |y|^{2}} \bigg|     = \sum_{ l\neq k} \frac{l}{2^{k}} \frac{||x| - |y|| (|x| + |y|)}{(1+ |x|^{2}) (1+ |y|^{2})}\\
    &   \le  \sum_{ l\neq k} \frac{l}{2^{k}} |x-y| \bigg(\frac{|x|}{1+ |x|^{2}} + \frac{|y|}{1+ |y|^{2}}\bigg)  \le 2 |x-y|.
\end{align*} This establishes \eqref{eq-q(x)-holder} and therefore verifies Assumption \ref{assumption-non-linear-growth}.

For the verification of Assumption  \ref{assumption-non-lip}, we compute
\begin{align*}
 \nonumber &2  \lan x-y, b(x,k)-b(y,k)\ran +|\sigma(x,k)-\sigma(y,k)|^{2} + \int_{U} |c(x,k,u)-c(y,k,u)|^{2} \nu(\d u)          \\
 \nonumber   & = -2  \sum_{j=1}^{3} (x_{j}- y_{j})(x_{j}^{1/3} - y_{j}^{1/3} +k x_{j}^{3} - k y_{j}^{3}) +\frac12  \sum_{j=1}^{3}(x_{j}^{2/3} - y_{j}^{2/3})^{2} \\  \nonumber & \qquad + \frac{2k}{9}\sum_{j=1}^{3}(x_{j}^{2}- y_{j}^{2})^{2} + \int_{U} \sum_{j=1}^{3} \gamma^{2} (x_{j}^{2/3} - y_{j}^{2/3})^{2} |u|^{2} \nu(\d u)\\
    & = - \frac{16k}{9}\sum_{j=1}^{3}  (x_{j}- y_{j})^{2}\biggl[\biggl(x_{j}+\frac{7}{16}y_{j}\biggr)^{2} + \frac{207}{256} y_{j}^{2} \biggr]- \sum_{j=1}^{3}\bigl(x_{j}^{1/3}- y_{j}^{1/3}\bigr)^{2} \bigl(x_{j}^{2/3}+y_{j}^{2/3}\bigr).
    \end{align*} Obviously  this implies  \eqref{eq-path-condition} and thus verifies Assumption   \ref{assumption-non-lip}.}
\end{exm}

\section{Feller Property}\label{sect-Feller}
In Section \ref{sect-str-formulation}, we established the  existence and uniqueness of a solution in the strong sense to
system \eqref{eq:X} and \eqref{eq:La-SDE} under Assumptions \ref{assumption-non-linear-growth} and \ref{assumption-non-lip}. The solution $(X,\La)$ is a two-component c\`adl\`ag strong Markov process.  In this section, we study the Feller property for such processes. For any $ f \in C_{b}(\R^{d}\times \ss)$, by the continuity of $f$ and the right continuity of the sample paths of $(X,\La)$, we can use   the bounded convergence theorem to obtain $\lim_{t\downarrow 0} \E_{x,k} [f(X(t),\La(t))] = f(x,k)$. Therefore the process $(X,\La)$ satisfies the Feller property if   the semigroup $P_{t} f(x,k): = \E_{x,k}[f(X(t),\La(t))], f \in \B_{b}(\R^{d}\times \ss)$ maps $C_{b}(\R^{d}\times \ss) $ into itself. Obviously, to establish the Feller property, we
only
need
the distributional properties of the process $(X,\La)$. Thus in lieu of the strong formulation used in Section  \ref{sect-str-formulation}, we will assume the following
 ``weak formulation'' throughout the section.


\begin{Assumption}\label{Assumption-wk soln-well-posed}{\rm
For any initial data
$(x,k)\in \R^{d}\times\ss$, the system of stochastic differential equations \eqref{eq:X} and \eqref{eq:La-SDE} has a non-exploding weak solution $(X^{(x,k)}, \La^{(x,k)})$ and the solution is unique in the sense of probability law.
}\end{Assumption}


 \begin{Assumption}\label{Assumption-Feller}{\rm
 There exist a positive constant $\delta_{0}  $ and an increasing and concave function $\varrho : [0,\infty) \mapsto [0,\infty)$ satisfying  \eqref{eq-varrho-conditions-Feller} such that  for all $R > 0$, there exists a constant $\kappa_{R} > 0$  such that
\begin{align}\label{eq-Feller-condition-q-kl}
   \sum_{l\in \ss\backslash\{k\}} \abs{q_{kl}(x) - q_{kl}(z)} \le \kappa_{R} \varrho(F( \abs{x-z})),
\   \text{ for all } k \in \ss \text{ and }  |x|\vee |z|  \le R \end{align}  where  $F(r): = \frac{r}{1+r}$ for $r \ge 0$, and either (i) or (ii) below holds:   \begin{itemize}
  \item[(i)] $d =1$. Then \eqref{eq1-1d path-cdn} and \eqref{eq1-1d-c-fn-path-cdn} hold.
  \item[(ii)]  \  $d \ge 2$. Then \begin{equation}\label{eq-Feller-condition-b-sg-c}\begin{aligned}
& \int_{U}  \bigl[ |c(x,k,u)-c(z,k,u)|^{2} \wedge (4 |x-z| \cdot |c(x,k,u) -c(z,k,u)|)\bigr] \nu(\d u) \\  &     \ \  +\,  2 \lan x-z, b(x,k)-b(z,k)\ran + |\sg(x,k)-\sg(z,k)|^{2 }  \le 2 \kappa_{R} |x-z| \varrho (|x-z|),
\end{aligned}\end{equation}
\end{itemize}
for all $k \in \ss$,  $x,z\in \R^{d}$  with  $ |x|\vee |z|  \le R$ and $|x-z| \le \delta_{0}$.
}\end{Assumption}

  \begin{thm}\label{thm-Feller}
Under  Assumptions \ref{Assumption-wk soln-well-posed} and \ref{Assumption-Feller}, the process $(X,\La)$ possesses the Feller property.
\end{thm}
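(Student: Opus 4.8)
The plan is to deduce the Feller property from a synchronous coupling of two solutions combined with a Bihari-type stability estimate. First I would reduce the assertion to a continuity statement. Since $\lim_{t\downarrow0}P_tf(x,k)=f(x,k)$ is already available, and since the discrete metric on $\ss$ makes $(x_n,k_n)\to(x,k)$ in $\lambda$ equivalent to $x_n\to x$ together with $k_n=k$ eventually, it suffices to fix $t>0$, $k\in\ss$ and $f\in C_{b}(\R^{d}\times\ss)$ and show that $z\mapsto P_tf(z,k)$ is continuous at each $x$. Using weak uniqueness (Assumption~\ref{Assumption-wk soln-well-posed}), I would realize on one filtered probability space two solutions $(X,\La)$ and $(\wdt X,\wdt\La)$ of \eqref{eq:X}--\eqref{eq:La-SDE} driven by the \emph{same} $W$, $N$ and $N_{1}$, started from $(x,k)$ and $(z,k)$. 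Then $|P_tf(z,k)-P_tf(x,k)|\le\E[\,|f(\wdt X(t),\wdt\La(t))-f(X(t),\La(t))|\,]$, and because $f$ is bounded and continuous it is enough to prove that $(\wdt X(t),\wdt\La(t))\to(X(t),\La(t))$ in probability in the metric $\lambda$ as $z\to x$, i.e.\ $\E[\,|\Delta_t|\wedge1\,]\to0$ and $\P\{\wdt\La(t)\neq\La(t)\}\to0$, where $\Delta_t:=\wdt X(t)-X(t)$.

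Set $\zeta:=\inf\{s\ge0:\La(s)\neq\wdt\La(s)\}$, $S_{\delta_{0}}:=\inf\{s:|\Delta_s|\ge\delta_{0}\}$ and $\tau_{R}:=\inf\{s:|X(s)|\vee|\wdt X(s)|\ge R\}$. On $[0,\zeta)$ the two regimes coincide, so there $\Delta$ solves a pure difference equation with $b,\sigma,c$ evaluated at the common value $\La(s)=\wdt\La(s)$. The strategy is to control separately (A) the continuous discrepancy $v(s):=\E[U(\Delta_{s\wedge\zeta\wedge\tau_{R}\wedge S_{\delta_{0}}})]$, with $U(y):=|y|^{2}/(1+|y|^{2})$, and (B) the splitting probability $\P\{\zeta\le t\wedge\tau_{R}\}$, and then to combine them through $\E[U(\Delta_t)]\le\E[U(\Delta_{t\wedge\zeta})]+\P\{\zeta\le t\}$ and $\P\{\wdt\La(t)\neq\La(t)\}\le\P\{\zeta\le t\}$.

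For (A) with $d\ge2$ I would apply It\^o's formula to $U(\Delta_{s\wedge\zeta\wedge\tau_{R}\wedge S_{\delta_{0}}})$. Since $\nabla U(y)=2y/(1+|y|^{2})^{2}$ and $\lan\nabla^{2}U(y)h,h\ran\le2|h|^{2}/(1+|y|^{2})^{2}$, the drift and diffusion parts of the $\d s$-integrand are bounded by $(1+|\Delta|^{2})^{-2}\big(2\lan\Delta,\,b(\wdt X,\La)-b(X,\La)\ran+|\sigma(\wdt X,\La)-\sigma(X,\La)|^{2}\big)$, while the compensated-jump part is estimated by the truncated integrand $\int_{U}[\,|c(\wdt X,\La,u)-c(X,\La,u)|^{2}\wedge(4|\Delta|\,|c(\wdt X,\La,u)-c(X,\La,u)|)\,]\nu(\d u)$ occurring in \eqref{eq-Feller-condition-b-sg-c}; matching the second-order increment of $U$ under the common Poisson jumps to this truncated quantity is the most technical point. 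Summing and using \eqref{eq-Feller-condition-b-sg-c} yields a drift bound $2\kappa_{R}|\Delta|\varrho(|\Delta|)/(1+|\Delta|^{2})^{2}$, and since $|\Delta|\varrho(|\Delta|)\le\varrho(|\Delta|^{2})$ for $|\Delta|\le1$ (concavity) the structural inequality \eqref{eq-varrho-conditions-Feller} converts this into a bound of the form $2\kappa_{R}\varrho(U(\Delta))$. Taking expectations and invoking Jensen's inequality gives $v(t)\le U(z-x)+2\kappa_{R}\int_{0}^{t}\varrho(v(s))\,\d s$; because $\int_{0^{+}}\d r/\varrho(r)=\infty$, Bihari's inequality forces $v(t)\to0$ uniformly on $[0,T]$ as $z\to x$. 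A stopping argument as in \thmref{thm-soln-existence-uniqueness} removes $S_{\delta_{0}}$: on $\{S_{\delta_{0}}\le t\wedge\zeta\}$ one has $U(\Delta_{t\wedge\zeta\wedge S_{\delta_{0}}})\ge U(\delta_{0})>0$, so $\P\{S_{\delta_{0}}\le t\wedge\zeta\}\to0$ and hence $\E[U(\Delta_{t\wedge\zeta\wedge\tau_{R}})]\to0$. When $d=1$ I would instead run the Yamada--Watanabe functions $\psi_{n}$ of \lemref{lem-pathwise-unique} under \eqref{eq1-1d path-cdn} and \eqref{eq1-1d-c-fn-path-cdn} to obtain directly $\E[|\Delta_{t\wedge\zeta\wedge\tau_{R}}|]\to0$.

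For (B) I would repeat the computation in Step~2 of \thmref{thm-soln-existence-uniqueness}: the two regimes split precisely when a point of $N_{1}$ falls in the symmetric difference of the interval families $\{\Delta_{\La(s-),l}(X(s-))\}$ and $\{\Delta_{\La(s-),l}(\wdt X(s-))\}$, whose total Lebesgue measure is at most $\sum_{l\neq\La(s-)}|q_{\La(s-),l}(\wdt X(s-))-q_{\La(s-),l}(X(s-))|$. Hence, by \eqref{eq-Feller-condition-q-kl}, $\P\{\zeta\le t\wedge\tau_{R}\}\le\kappa_{R}\int_{0}^{t}\E[\varrho(F(|\Delta_{s\wedge\zeta\wedge\tau_{R}}|))]\,\d s$ with $F(r)=r/(1+r)$. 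For $d\ge2$ the elementary bound $F(r)\le r/\sqrt{1+r^{2}}=\sqrt{U(r)}$, concavity of $\varrho$ and Jensen give $\E[\varrho(F(|\Delta|))]\le\varrho(\sqrt{v})$, so $\P\{\zeta\le t\wedge\tau_{R}\}\le\kappa_{R}\int_{0}^{t}\varrho(\sqrt{v(s)})\,\d s\to0$ as $z\to x$ by (A) and $\varrho(0)=0$; for $d=1$ one uses $F(r)\le r$ and $\E[\varrho(|\Delta|)]\le\varrho(\E[|\Delta|])$. Combining (A) and (B), $\E[U(\Delta_t)]\le\E[U(\Delta_{t\wedge\zeta})]+\P\{\zeta\le t\}\to0$ and $\P\{\wdt\La(t)\neq\La(t)\}\le\P\{\zeta\le t\}\to0$, and a final localization ($R\to\infty$), legitimate because both copies are non-explosive by Assumption~\ref{Assumption-wk soln-well-posed}, yields convergence in probability and thus continuity of $z\mapsto P_tf(z,k)$. \emph{The main obstacle} is the genuine discontinuity of $f$ across switches: pathwise continuity of $X$ alone does not suffice, and one must show that the switching \emph{times} of the two copies synchronize as $z\to x$. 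This synchronization (estimate~(B)) is driven by the same modulus that controls the continuous discrepancy (estimate~(A)), and the compatibility of the switching-rate modulus $\varrho(F(\cdot))$ with the coefficient modulus is exactly what \eqref{eq-varrho-conditions-Feller} supplies; the single most delicate inequality is the jump-term estimate for $U$ matched to the truncation in \eqref{eq-Feller-condition-b-sg-c}.
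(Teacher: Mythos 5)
Your overall strategy---couple the two copies, control the discrepancy of the continuous components by a concave modulus plus Bihari's inequality, control the first splitting time $\zeta$ of the discrete components through the modulus \eqref{eq-Feller-condition-q-kl}, and then combine---is exactly the architecture of the paper's proof (which runs the same three estimates with the concave function $F(r)=r/(1+r)$ and concludes via the Wasserstein distance $W_f$). However, there are two genuine gaps in your execution.

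First, the construction of your coupling. Section \ref{sect-Feller} deliberately works under the weak formulation of Assumption \ref{Assumption-wk soln-well-posed}: only a weak solution, unique in law, is posited. You cannot ``realize on one filtered probability space two solutions driven by the \emph{same} $W$, $N$ and $N_{1}$'' from that; a synchronous coupling of the full system requires strong existence, which is neither assumed nor derivable from Assumption \ref{Assumption-Feller} (for $d=1$ no regularity at all is imposed on $\sigma(\cdot,k)$, so pathwise uniqueness may fail). The paper sidesteps this by building the coupling \emph{in distribution}, as a process associated with the coupling operator $\wdt\A$ of \eqref{eq-A-coupling-operator}; your argument needs either that device or an explicit justification that the synchronous coupling exists as a weak solution of the coupled system.

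Second, your $d=1$ case fails. Running the Yamada--Watanabe functions $\psi_{n}$ of Lemma \ref{lem-pathwise-unique} requires controlling the second-order term $\tfrac12\psi_{n}''(\Delta_s)\,|\sigma(\wdt X(s),k)-\sigma(X(s),k)|^{2}$, and since $\psi_{n}''\ge 0$ this needs the H\"older-type bound \eqref{eq2-1d path-cdn} on $\sigma$ and $c$ --- which belongs to Assumption \ref{assumption-non-lip} and is \emph{not} part of Assumption \ref{Assumption-Feller}(i). This is not a technicality: the whole point of the one-dimensional statement (see the remark following Theorem \ref{thm-Feller}) is that no condition on $\sigma(\cdot,k)$ is required. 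The paper achieves this by using $F(r)=r/(1+r)$ also when $d=1$: since $F''<0$, the diffusion contribution $\tfrac12 F''(|x-z|)(\sigma(x,k)-\sigma(z,k))^{2}$ in $\wdt\Omega_{\mathrm d}^{(k)}F(|x-z|)$ is nonpositive and is simply discarded, while the jump term is killed by concavity of $F$ together with the monotonicity \eqref{eq1-1d-c-fn-path-cdn} (see Lemma \ref{lem-Feller-operator-estimation}). You should replace your $\psi_n$-based step by this concave-function argument. Your $d\ge 2$ estimate with $U(y)=|y|^{2}/(1+|y|^{2})$ and the conversion via \eqref{eq-varrho-conditions-Feller} is sound and is essentially the content of the cited Lemma 4.5 of \cite{XiZ-18b}, as is your treatment of $\P\{\zeta\le t\}$.
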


\begin{rem}{\rm Feller and strong Feller properties for regime-switching (jump) diffusions have been investigated in \cite{Shao-15,XiZ-17,YZ-10}, among others. A standard assumption in these references is that the coefficients satisfy the Lipschitz condition.
In contrast, Theorem \ref{thm-Feller} establishes Feller property for  
system \eqref{eq:X} and \eqref{eq:La-SDE} under local non-Lipschitz conditions. When $d =1$, the result is even more remarkable. Indeed, Feller property is derived with only very mild conditions on $b(\cdot, k)$, $c(\cdot, k, u)$, and $Q(x)$, and with
virtually no condition imposed on  $\sigma(\cdot, k)$.
}\end{rem}

We will use the coupling method to prove Theorem \ref{thm-Feller}. To this end, let us first construct a coupling operator $\wdt \A$ for $\A$: For $f(x,i, z,j) \in C_{c}^{2} (\R^{d} \times \ss\times \R^{d} \times \ss )$, we define
\begin{equation}
\label{eq-A-coupling-operator} \begin{aligned}
\wdt{ \mathcal{A}} & f(x,i,z,j)  : =\! \bigl[ \wdt \Omega_{\text{d}}   + \wdt \Omega_{\text{j}}   + \wdt \Omega_{\text{s}} \bigr] f(x,i,z,j),
\end{aligned}\end{equation} where $ \wdt \Omega_{\text{d}}$,  $\wdt \Omega_{\text{j}}$, and    $ \wdt \Omega_{\text{s}}$ are defined as follows.
For $x,z\in \R^{d}  $ and $i,j\in \ss $, we set $a(x,i)= \sigma(x,i)\sigma(x,i)'$ and
$$\begin{aligned}a(x,i,z,j) & =\begin{pmatrix}
a(x,i) & \sigma (x,i) \sigma (z,j)' \\
\sigma (z,j) \sigma (x,i)' & a(z,j)
\end{pmatrix},\ \
b(x,i,z,j) =\begin{pmatrix}
b(x,i)\\
b(z,j) \end{pmatrix}. \end{aligned}$$  Then we define
\begin{equation}\label{eq-Omega-d-defn} \begin{aligned}
\wdt {\Omega}_{\text{d}}f(x,i,z,j): =\frac
{1}{2}\hbox{tr}\bigl(a(x,i,z,j)D^{2}f(x,i, z,j)\bigr)  +\langle
b(x,i,z,j), D f(x,i, z,j)\rangle,
\end{aligned}\end{equation}
 \begin{equation}\label{eq-Omega-j-defn}
\begin{aligned}   \displaystyle\wdt {\Omega}_{\text{j}} f(x,i,z,j)
& : = \int_{U}
\big[f(x+c(x,i,u),i, z+c(z,j,u), j)-f(x,i,z,j)  \\
  & \quad   -  \langle D_{x } f(x,i,z,j),   c(x,i,u)\rangle 
 - \langle D_{z} f(x,i, z,j),   c(z,j,u)\rangle   \big] \nu(\d u),
\end{aligned}
\end{equation} where $D f(x,i, z,j)= (D_{x } f(x,i,z,j), D_{z} f(x,i,z,j))'$ is the gradient and $D^{2}f(x,i,z,j)$   the Hessian matrix of   $f$ with respect to the $x,z$ variables,  and
\begin{equation}
\label{eq-Q(x)-coupling} \begin{aligned}
\wdt \Omega_{\text{s}}  f(x,i,z,j) & : =
 \sum_{l\in\ss}[q_{il}(x)-q_{jl}(z)]^+(   f(x,l, z, j)-  f(x,i, z, j))\\
&\quad +\sum_{l\in\ss}[q_{jl}(z) -q_{il}(x)]^+( f(x,i, z, l)- f(x,i, z, j) )\\
&\quad  +\sum_{l\in\ss}[ q_{il}(x) \wedge q_{jl}(z) ](  f(x,l, z, l)-f(x,i, z, j)).
\end{aligned}\end{equation} For convenience of later presentation, for 
any function $f : \R^{d}\times \R^{d}\mapsto \R$, let $\wdt f:\R^{d} \times \ss\times \R^{d} \times \ss \mapsto \R$ be defined by $\wdt f(x,i,z,j): =f(x,z)$.  Now we   denote $$\wdt \LL_{k} f(x,z) =( \wdt {\Omega}_{\text{d}}^{(k)} + \wdt {\Omega}_{\text{j}}^{(k)}   ) f(x,z) : = ( \wdt {\Omega}_{\text{d}} + \wdt {\Omega}_{\text{j}}   ) \wdt f(x,k,z,k), \forall f \in C^{2}_{c}(\R^{d}\times \R^{d}) $$ for each $k\in \ss$.
We proceed to establish the following lemma.

\begin{lem}\label{lem-Feller-operator-estimation}
 Suppose Assumption \ref{Assumption-Feller} holds. Consider the functions  \begin{equation}
\label{eq-f-g-functions} g(x,k,z,l): = \one_{\{k\neq l\}}, \text{ and }
  f(x,k, z,l): = F(|x-z|) + \one_{\{ k \neq l\}},  \
\end{equation} for $ (x,k, z, l) \in \R^{d}\times \ss \times \R^{d}\times \ss.$
    Then    we have
    \begin{equation}
 \label{eq1-switching-est}
  \wdt   \A g(x,k,z,l)   \le  \kappa_{R} \varrho(F( \abs{x-y})),  \text{ for all }k,l \in\ss \text{ and }x,z\in \R^{d} \text{ with  }|x| \vee |z| \le R,
\end{equation} and
 \begin{equation}
\label{eq-A-couple-est}
\wdt \A f(x, k,z, k) \le  2  \kappa_{R}\varrho (F (|x-z|)),
\end{equation}   for all $k\in\ss $  and $x,z\in \R^{d}$   with  $|x| \vee |z| \le R$   and  $0 < |x-z| \le \delta_{0}$; in which $\kappa_{R}$ is the same positive constant as in Assumption \ref{Assumption-Feller}.
  \end{lem}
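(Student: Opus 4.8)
The plan is to exploit the linear decomposition $f = \wdt F + g$, where $\wdt F(x,i,z,j) := F(|x-z|)$ and $g(x,i,z,j) := \one_{\{i\neq j\}}$, and to treat the three blocks $\wdt\Omega_{\mathrm d}$, $\wdt\Omega_{\mathrm j}$, $\wdt\Omega_{\mathrm s}$ of $\wdt\A$ separately. Since $g$ is constant in the continuous variables, $\wdt\Omega_{\mathrm d} g = \wdt\Omega_{\mathrm j} g = 0$, so $\wdt\A g = \wdt\Omega_{\mathrm s} g$; likewise $\wdt\Omega_{\mathrm s}\wdt F = 0$ because $\wdt F$ does not depend on the discrete coordinates. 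Hence, evaluated at $(x,k,z,k)$, the coupling generator splits cleanly as $\wdt\A f(x,k,z,k) = \wdt\LL_k F(x,z) + \wdt\Omega_{\mathrm s} g(x,k,z,k)$, reducing both claims to a switching estimate for $\wdt\Omega_{\mathrm s} g$ and a continuous-part estimate for $\wdt\LL_k F$.

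First I would dispose of the switching estimate. Writing out $\wdt\Omega_{\mathrm s} g$ and distinguishing $k=l$ from $k\neq l$: when $k\neq l$ each difference $g(\cdot)-g(x,k,z,l)$ is nonpositive, so $\wdt\Omega_{\mathrm s} g \le 0$; when $k=l$ the third sum vanishes and the first two collapse to $\sum_{m\neq k}\bigl([q_{km}(x)-q_{km}(z)]^+ + [q_{km}(z)-q_{km}(x)]^+\bigr) = \sum_{m\neq k}|q_{km}(x)-q_{km}(z)|$, which is bounded by $\kappa_R\varrho(F(|x-z|))$ via \eqref{eq-Feller-condition-q-kl}. This yields \eqref{eq1-switching-est} directly and supplies one copy of $\kappa_R\varrho(F(|x-z|))$ toward \eqref{eq-A-couple-est}.

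The substance is the estimate $\wdt\LL_k F(x,z)\le \kappa_R\varrho(F(|x-z|))$. Setting $w=x-z$, $r=|w|$, $\phi(w)=F(|w|)$, I would first record the simplifications forced by the coupling structure: the coupled diffusion matrix contracts the Hessian to $\frac12\tr\bigl((\sigma(x,k)-\sigma(z,k))(\sigma(x,k)-\sigma(z,k))^{T} D^2\phi(w)\bigr)$, the drift pairs as $\langle b(x,k)-b(z,k),\nabla\phi(w)\rangle$, and the jump part becomes $\int_U[\phi(w+\gamma(u))-\phi(w)-\langle\nabla\phi(w),\gamma(u)\rangle]\,\nu(\d u)$ with $\gamma(u):=c(x,k,u)-c(z,k,u)$. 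For $d\ge2$ I would prove three pointwise bounds all carrying the factor $\frac{1}{2r(1+r)^2}=\frac{F'(r)}{2r}$: the Hessian term is $\le \frac{1}{2r(1+r)^2}|\sigma(x,k)-\sigma(z,k)|^2$ since $F''<0$ lets one discard the radial contribution; the drift term equals $\frac{1}{2r(1+r)^2}\cdot 2\langle x-z,b(x,k)-b(z,k)\rangle$; and each jump increment satisfies $\phi(w+\gamma)-\phi(w)-\langle\nabla\phi(w),\gamma\rangle\le \frac{1}{2r(1+r)^2}(|\gamma|^2\wedge 4r|\gamma|)$. Summing and invoking \eqref{eq-Feller-condition-b-sg-c} collapses the bracket to $2\kappa_R r\varrho(r)$, giving $\wdt\LL_k F\le \frac{\kappa_R\varrho(r)}{(1+r)^2}$, which is exactly $\le \kappa_R\varrho(F(r))$ by the structural inequality $\varrho(r)\le(1+r)^2\varrho(r/(1+r))$ in \eqref{eq-varrho-conditions-Feller}; adding the switching part produces the factor $2$ in \eqref{eq-A-couple-est}.

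I expect the jump increment bound to be the main obstacle, precisely because $\phi=F(|\cdot|)$ is not $C^2$ at the origin, so a jump $\gamma$ may carry $w+\theta\gamma$ across or near the diagonal and a naive second-order Taylor estimate (which would produce the non-integrable factor $|w+\theta\gamma|^{-1}$) is unavailable. My intended remedy is to avoid Taylor and argue by concavity of $F$ on $[0,\infty)$: from $F(|w+\gamma|)\le F(r)+F'(r)(|w+\gamma|-r)$ one gets the increment $\le F'(r)(|w+\gamma|-r-\langle\hat w,\gamma\rangle)$, after which setting $\rho=r+\langle\hat w,\gamma\rangle$ and $q^2=|\gamma|^2-\langle\hat w,\gamma\rangle^2$ reduces matters to the elementary inequalities $\sqrt{\rho^2+q^2}-\rho\le |\gamma|^2/(2r)$ (from the AM--GM bound $2r\sqrt{\rho^2+q^2}\le r^2+\rho^2+q^2$) and $\sqrt{\rho^2+q^2}-\rho\le 2|\gamma|$, whose minimum, multiplied by $F'(r)$, is the desired $\frac{|\gamma|^2\wedge 4r|\gamma|}{2r(1+r)^2}$. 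Finally, for $d=1$ the computation is softer and I would treat it separately: the Hessian term is automatically $\le0$ since $\phi''<0$ away from $0$ (which is why no condition on $\sigma$ is needed), each jump increment is $\le0$ by concavity of $F$ on a half-line once the monotonicity condition \eqref{eq1-1d-c-fn-path-cdn} guarantees that $w$ and $w+\gamma$ keep the same sign, and only the drift term survives, bounded by the sign condition of Assumption \ref{Assumption-Feller}(i) together with \eqref{eq-varrho-conditions-Feller}.
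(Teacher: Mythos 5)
Your proof is correct and follows the same structure as the paper's: the same decomposition $\wdt\A f(x,k,z,k) = \wdt\LL_k F(|x-z|) + \wdt\Omega_{\mathrm{s}} g(x,k,z,k)$, the same switching estimate via \eqref{eq-Feller-condition-q-kl}, and the same $d=1$ treatment ($F''<0$ discards the diffusion term, concavity of $F$ plus \eqref{eq1-1d-c-fn-path-cdn} kill the jump term, and \eqref{eq1-1d path-cdn} together with the first half of \eqref{eq-varrho-conditions-Feller} handle the drift). The only difference is that for $d\ge 2$ the paper simply cites Lemma 4.5 of \cite{XiZ-18b} for the bound $\wdt\LL_k F(|x-z|)\le \kappa_R\varrho(F(|x-z|))$, whereas you re-derive it from scratch; your derivation checks out, in particular the concavity-based jump bound $F'(r)\bigl(|w+\gamma|-r-\langle \hat w,\gamma\rangle\bigr)\le \frac{|\gamma|^2\wedge 4r|\gamma|}{2r(1+r)^2}$ obtained from the two elementary inequalities you state.
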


  \begin{proof}
  Consider the function $g(x,k,z,l): = \one_{\{k\neq l\}}$. It follows directly from the definition that $\wdt   \A g(x,k,z,l) = \wdt \Omega_{\text{s}} g(x,k,z,l) \le 0$ when $k \neq l$. When $k =l$, we have  from \eqref{eq-Feller-condition-q-kl} that
\begin{align}\label{eq-coupling est 1}
 \nonumber  \wdt   \A g(x,k,z,l) & =\wdt \Omega_{\text{s}} g(x,k,z,k) & \\
 \nonumber  &   = \sum_{ i\in \ss} [q_{ki}(x)-q_{ki}(z)]^+( \one_{\{i \neq k \}} - \one_{\{ k\neq k\}})\\
\nonumber  &\qquad+\sum_{i \in\ss}[q_{ki}(z)  -q_{ki}(x)]^+(  \one_{\{i \neq k \}} - \one_{\{ k\neq k\}} ) + 0 \\
 & \le \sum_{i \in\ss, i \neq k} \abs{q_{ki}(x)-q_{ki}(z)}
 \le  \kappa_{R} \varrho(F( \abs{x-y})).
\end{align} Hence \eqref{eq1-switching-est} holds for all $k,l \in\ss$ and $x,z\in \R^{d}$ with  $|x| \vee |z| \le R$.
On the other hand, when $d\ge 2$,  \eqref{eq-Feller-condition-b-sg-c} and Lemma 4.5 of \cite{XiZ-18b} reveals  that  $$  \wdt \LL_{k}F(|x-z|) =  ( \wdt \Omega_{\text{d}}^{(k)} + \wdt \Omega_{\text{j}}^{(k)})  F(|x-z|) \le \kappa_{R} \varrho (F(|x-z|))$$ and hence \begin{equation}\label{eq-wdt-LL-F estimate}
\wdt \A f(x, k,z, k) =   \wdt \LL_{k}F(|x-z|)
+ \wdt \Omega_{\text{s}} g(x,k,z,k)  \le  2 \kappa_{R} \varrho (F(|x-z|)),
\end{equation} for all  $k \in \ss$,  $x,z\in \R^{d}$ with  $|x| \vee |z| \le R$ and $0 < |x-z| \le \delta_{0}$, where $\wdt\LL_{k}$ is the basic coupling operator for $\LL_{k}$ of \eqref{eq-Lk-operator-defn}.

We next show that \eqref{eq-A-couple-est} holds when $d =1$. Indeed, taking advantage of the fact that $d=1$, we see that 
\begin{align*}
&\wdt \Omega_{\text{d}}^{(k)} F(|x-z|) \\ &\ \ = F'(|x-z|) \sgn(x-z) (b(x,k) - b(z,k)) + F''(|x-z|) (\sigma(x,k)-\sigma(z,k))^{2}.
\end{align*}  But since $F'(r) = \frac{1}{(1+r)^{2}} $ and $F''(r)  =-\frac{2}{(1+r)^{3}} < 0$ for $r \ge 0$, we have from \eqref{eq1-1d path-cdn} that \begin{align}\label{eq-coupling est 2}
\nonumber \wdt \Omega_{\text{d}}^{(k)} F(|x-z|) & \le \frac{1}{(1+ |x-z|)^{2}}  \sgn(x-z) (b(x,k) - b(z,k))\\ &  \le \frac{\kappa_{R}\varrho(|x-z|)}{(1+ |x-z|)^{2}} \le \kappa_{R} \varrho(F(|x-z|)),
\end{align} for all $x,z\in \R$ with $|x| \vee |z| \le R$ and $0< |x-z| \le \delta_{0}$, where we used the first equation in \eqref{eq-varrho-conditions-Feller} to derive the last inequality.

On the other hand, since the function $F$ is concave on $[0,\infty)$, we have $F(r) -F(r_{0}) \le F'(r_{0})(r-r_{0})$ for all $r, r_{0} \in [0,\infty)$. Applying this inequality with $r_{0} = |x-z|$ and $r=|x-z+ c(x,k,u)- c(z,k,u)|$ yields
\begin{align*}
F&(|x-z+ c(x,k,u)- c(z,k,u)|)-F( |x-z|) \\&\le F'( |x-z|) ( |x-z+ c(x,k,u)- c(z,k,u)|-  |x-z|).
\end{align*} Furthermore, since by \eqref{eq1-1d-c-fn-path-cdn}, the function $x\mapsto x+ c(x,k,u)$ is increasing, it follows that for $x> z$ \begin{align*}
F&(|x-z+ c(x,k,u)- c(z,k,u)|)-F( |x-z|) \\&\le F'( |x-z|) ( x-z+ c(x,k,u)- c(z,k,u) -   (x-z)) \\
 & = F'( |x-z|) ( c(x,k,u)- c(z,k,u)).
\end{align*}
As a result, we can compute
\begin{align*}
  &  \wdt\Omega_{\text{j}}^{(k)}  F(|x-z|) \\
   & \ \ = \int_{U} [F(|x-z+ c(x,k,u)- c(z,k,u)|)-F( |x-z|) \\
   & \ \ \qquad- F'( |x-z|)\sgn(x-z) ( c(x,k,u)- c(z,k,u)) ]   \nu(\d u)  \\
    & \ \ \le  \int_{U}[ F'( |x-z|) ( c(x,k,u)- c(z,k,u)) -  F'( |x-z|) ( c(x,k,u)- c(z,k,u))]\nu(\d u) \\
    &\ \  =0,
\end{align*} for all $x > z$. By symmetry, we also have $\wdt\Omega_{\text{j}}^{(k)}  F(|x-z|) \le 0$ for $x < z$. These observations, together with \eqref{eq-coupling est 1} and \eqref{eq-coupling est 2}, imply  that $$\wdt \A f(x,k,z,k) =( \wdt \Omega_{\text{d}}^{(k)} + \wdt \Omega_{\text{j}}^{(k)} ) F(|x-z|) + \wdt \Omega_{\text{s}} g(x,k,z,k)  \le 2 \kappa_{R} \varrho(F(|x-z|)),$$ for all $k\in \ss$, $x,z\in \R$ with $|x| \vee |z| \le R$ and $0< |x-z| \le \delta_{0}$
This completes the proof. \qed
\end{proof}

  \begin{proof}[Proof of Theorem \ref{thm-Feller}]  It is straightforward to verify that the function $f$ of \eqref{eq-f-g-functions} defines a bounded  metric on $\R^{d}\times \ss$.  Let
$(\wdt X(\cdot), \wdt \La(\cdot),   \wdt Z(\cdot), \wdt \Xi(\cdot))$ denote the coupling process
corresponding to the coupling operator  $\wdt \A$
with initial condition $(x, k,z, k)$,  in which $\delta_{0} > |x-z | > 0$.  Define  $\zeta: = \inf\{ t \ge0: \wdt \La(t) \neq \wdt \Xi(t)\}  $. Note that $\P\{ \zeta > 0\} =1$.  Suppose $|x-z| > \frac{1}{n_{0}}$ for some $n_{0}\in \mathbb N$.
         For $n\ge n_{0}$ and $  R >  |x| \vee |z|$,  define
\begin{align*}
 & T_n : =   \inf \Bigl\{  t \ge 0:   | \wdt X  (t) - \wdt Z  (t) | < \frac{1}{n}\Bigr\}, \\
    &  \tau_{R} : =  \inf \{  t \ge 0:   | \wdt X (t) | \vee |\wdt  Z  (t) | \vee \wdt \La(t) \vee \wdt\Xi(t)> R\}, \end{align*}
    and \begin{align*}
    & S_{\delta_{0}}: = \inf\{t \ge 0:  |\wdt  X  (t) - \wdt  Z  (t) | > \delta_{0} \}.
\end{align*} We have $\tau_{R}\to \infty$ and $T_{n}\to T$ a.s. as $R\to \infty$ and $n \to \infty$, respectively, in which $T$ denotes the first time when $\wdt X(t) $ and $   \wdt Z(t)$ coalesce.
 To simplify notation,
 denote $\wdt \Delta(s) : = \wdt X(s) - \wdt Z(s)$. By It\^o's formula and \eqref{eq-wdt-LL-F estimate}, we have
\begin{align*}
 \E&[F(|\wdt  \Delta(t\wedge T_{n} \wedge S_{\delta_{0}} \wedge \tau_{R}\wedge \zeta)|)] \\
 & \le \E \big[f\big(\wdt X(t\wedge T_{n} \wedge S_{\delta_{0}} \wedge \tau_{R}\wedge \zeta), \wdt \La(t\wedge T_{n} \wedge S_{\delta_{0}} \wedge \tau_{R}\wedge \zeta), \\
    & \qquad \qquad \wdt Z(t\wedge T_{n} \wedge S_{\delta_{0}} \wedge \tau_{R}\wedge \zeta), \wdt\Xi(t\wedge T_{n} \wedge S_{\delta_{0}} \wedge \tau_{R}\wedge \zeta)  \big)\big] \\
   & = F(|x-z|) + \E\biggl[\int_{0}^{t\wedge T_{n} \wedge S_{\delta_{0}} \wedge \tau_{R}\wedge \zeta} \wdt \A f(\wdt X(s), \Delta(s), \wdt Z(s),\wdt\Xi(s)) \d s \biggr]\\
   & \le  F(|x-z|) +2  \kappa_{R}  \E\biggl[\int_{0}^{t\wedge T_{n} \wedge S_{\delta_{0}} \wedge \tau_{R}\wedge \zeta} \varrho  (F(|\wdt  \Delta(s) |))\d s \biggr].
   \end{align*}
   Now passing to the limit as $n \to \infty$, it follows from the bounded and monotone convergence theorems that
  \begin{align*}
\E& [F(|\wdt  \Delta (t\wedge T \wedge S_{\delta_{0}}\wedge \tau_{R} \wedge \zeta)|)] \\
& \le  F(|x-z|) + 2\kappa_{R}  \E\biggl[\int_{0}^{t\wedge T \wedge S_{\delta_{0}} \wedge \tau_{R}\wedge \zeta} \varrho  (F(|\wdt  \Delta(s) |))\d s \biggr]\\
   & \le F(|x-z|) + 2\kappa_{R}  \E\biggl[\int_{0}^{t} \varrho  (F(|\wdt  \Delta (s\wedge T \wedge S_{\delta_{0}} \wedge \tau_{R} \wedge \zeta)|))\d s \biggr]\\
   &  \le  F(|x-z|) +2 \kappa_{R}\int_{0}^{t} \varrho \bigl(\E[F(|\wdt  \Delta (s\wedge T \wedge S_{\delta_{0}}\wedge \tau_{R}\wedge \zeta) |)]\bigr)\d s,
   \end{align*}
 where we used the concavity of $\varrho $ and Jensen's inequality to obtain the last inequality.
 Then
 using 
 Bihari's inequality, we have
\begin{displaymath}
\E [F(|\wdt  \Delta (t\wedge T \wedge S_{\delta_{0}}\wedge \tau_{R} \wedge \zeta)|)]  \le G^{-1} (G\circ F(|x-z|) + 2\kappa_{R} t),
\end{displaymath}  where the function $G(r): = \int_{1}^{r} \frac{\d s} {\varrho (s)}$ is strictly increasing and satisfies $G(r) \to -\infty$ as $r \downarrow 0$.  In addition, since the function $F$ is strictly increasing, we have
\begin{align*}
   F(\delta_{0}) \P\{S_{\delta_{0}} < t \wedge T\wedge \tau_{R}\wedge \zeta \}
    & \le  \E [F(|\wdt  \Delta (t\wedge T \wedge S_{\delta_{0}} \wedge \tau_{R}\wedge \zeta) |) \one_{\{ S_{\delta_{0}} < t \wedge T\wedge \tau_{R}\wedge \zeta\}}]    \\
    &   \le \E [F(|\wdt  \Delta (t\wedge T \wedge S_{\delta_{0}} \wedge \tau_{R}\wedge \zeta) |)]  \\
    & \le G^{-1} (G\circ F(|x-z|) +2 \kappa_{R} t).
\end{align*} This implies that
\begin{align*}
 \P\{S_{\delta_{0}} < t \wedge T\wedge \tau_{R}\wedge \zeta \} &\le \frac{G^{-1} (G\circ F(|x-z|) + 2\kappa_{R} t) }{F(\delta_{0})}\\ &= \frac{1+ \delta_{0}}{\delta_{0}}G^{-1} (G\circ F(|x-z|) + 2\kappa_{R} t).
\end{align*}

 For any $t \ge 0$ and $\e > 0$, since $\lim_{R\to\infty} \tau_{R} = \infty$ a.s., we can choose
 $R > 0$ sufficiently large so that \begin{equation}
\label{eq-tau-R >t}
\P(t\wedge\zeta > \tau_{R} )\le \P(t > \tau_{R}) < \e. \end{equation}Then it follows that
\begin{align}
\nonumber  & \E [F(|\wdt \Delta(t\wedge \zeta)|)] \\
  \nonumber & \ =   \E [F(|\wdt  \Delta(t \wedge \zeta\wedge \tau_{R}) |)\one_{\{t \wedge \zeta\le \tau_{R} \}}] +  \E [F(|\wdt  \Delta(t\wedge \zeta) |) \one_{\{ t \wedge \zeta> \tau_{R}\}}]  \\
 \nonumber   &\ \le   \E[F(|\wdt  \Delta (t \wedge \zeta \wedge T\wedge \tau_{R}) |)]   + \e   \\
 \nonumber   & \  = \E[F(|\wdt  \Delta (t\wedge \zeta\wedge T\wedge \tau_{R}) |)\one_{\{ S_{\delta_{0}} < t \wedge T\wedge \tau_{R}\wedge \zeta\}}]  \\ \nonumber &\quad
    + \E[F(|\wdt  \Delta (t\wedge \zeta\wedge T\wedge \tau_{R}) |)\one_{\{ S_{\delta_{0}} \ge  t \wedge T\wedge \tau_{R}\wedge \zeta\}}]  + \e\\
 \nonumber   &\ \le \P\{S_{\delta_{0}} < t \wedge T\wedge \tau_{R}\wedge \zeta \} + \E [F(|\wdt  \Delta (t\wedge T\wedge \tau_{R}\wedge S_{\delta_{0}}\wedge \zeta )|)] + \e\\
 \label{eq0-F-Delta-mean-est}   &\ \le \frac{1 +2 \delta_{0}}{\delta_{0}} G^{-1} (G\circ F(|x-z|) + 2\kappa_{R} t) + \e.
\end{align}
Passing to the limit, we obtain \begin{equation}
\label{eq1-F-Delta-mean-est}
\lim_{x-z \to 0} \E [F(|\wdt \Delta(t\wedge \zeta) |)] \le 0+ \e =\e.
\end{equation}   Since $\e >0$ is arbitrary, it follows that  $\lim_{x-z \to 0}\E[F(|\wdt X(t\wedge \zeta) - \wdt Z(t\wedge \zeta)| ) ]  = 0$.

Choose $R> 0$ as in \eqref{eq-tau-R >t}. Then we use  \eqref{eq1-switching-est} and \eqref{eq0-F-Delta-mean-est} to compute
\begin{align*}
  \P     \{\zeta \le t \} &=    \P     \{\zeta \le t, \tau_{R} < t \} + \P     \{\zeta \le t, \tau_{R} \ge  t \}   \\
    &   \le  \P     \{  \tau_{R} < t \}  + \E\big[\one_{\{\wdt \La(t \wedge \zeta \wedge \tau_{R}) \neq\wdt \Xi(t \wedge \zeta \wedge \tau_{R})  \}} \big] \\
    & < \e + \E [g(\wdt X(t \wedge \zeta \wedge \tau_{R}), \wdt \La(t \wedge \zeta \wedge \tau_{R}), \wdt Z(t \wedge \zeta \wedge \tau_{R}), \wdt \Xi(t \wedge \zeta \wedge \tau_{R}))] \\
    & = \e + \E\bigg[\int_{0}^{t \wedge \zeta \wedge \tau_{R}} \wdt \A g(\wdt X(s), \wdt \La(s), \wdt Z(s), \wdt\Xi(s))\d s\bigg] \\
    &  \le  \e + \E\bigg[\int_{0}^{t \wedge \zeta \wedge \tau_{R}} \kappa_{R} \varrho (F(|\wdt \Delta(s)|)) \d s \bigg]
   \\&  \le \e+ \E\bigg[\int_{0}^{t  \wedge \tau_{R}} \kappa_{R} \varrho (F(|\wdt \Delta(s\wedge \zeta)|)) \d s \bigg]\\
       & \le \e+ \E\bigg[\int_{0}^{t } \kappa_{R} \varrho (F(|\wdt \Delta(s\wedge \zeta)|)) \d s \bigg]
      \\&  \le \e + \kappa_{R}  \int_{0}^{t }   \varrho ( \E[F(|\wdt \Delta(s\wedge \zeta)|)]) \d s \\
       & \le \e +  \kappa_{R}  \int_{0}^{t }   \varrho \bigg(\frac{1 +2 \delta_{0}}{\delta_{0}} G^{-1} (G\circ F(|x-z|) + 2\kappa_{R} s) + \e\bigg) \d s \\
       & \le \e + \kappa_{R} t   \varrho \bigg(\frac{1 +2 \delta_{0}}{\delta_{0}} G^{-1} (G\circ F(|x-z|) +2 \kappa_{R} t) + \e\bigg).
    \end{align*}
  Passing to the limit as $x-z \to 0$, we obtain \begin{equation}
\label{eq2-P-zeta< t-est}
 \limsup_{x-z\to 0}\P     \{\zeta \le t \} \le \e + \kappa_{R} t \varrho(\e).
\end{equation}

 Finally, we combine  \eqref{eq1-F-Delta-mean-est} and \eqref{eq2-P-zeta< t-est} to obtain
\begin{align*}
\E& [f(\wdt X(t), \wdt \La(t),   \wdt Z(t), \wdt \Xi(t) )] \\&= \E \bigl[F(  | \wdt X(t  )-   \wdt Z(t  )  |) +  \one_{\{\wdt \La(t) \neq \wdt\Xi(t) \}}  \bigr] \\
 & =  \E \bigl[F(  | \wdt X(t  )-   \wdt Z(t  )  |)\one_{\{\zeta > t \}} + F(  | \wdt X(t  )-   \wdt Z(t  )  |)\one_{\{\zeta \le  t \}}  + \one_{\{\wdt \La(t) \neq \wdt\Xi(t) \}}  \bigr]  \\
  & \le\E \bigl[F(  | \wdt X(t \wedge\zeta )-   \wdt Z(t  \wedge\zeta)  |)\big] + 2\P\{\zeta \le t \} \\ &
   \to \e +2( \e + \kappa_{R} t \varrho(\e)), \text{ as } |x-z| \to 0.
\end{align*} Since $\e > 0$ is arbitrary and $\lim_{r\downarrow 0}\varrho(r) =0$, it  follows that $$\lim_{x-z \to 0 } \E [f(\wdt X(t), \wdt \La(t),   \wdt Z(t), \wdt \Xi(t) )] =0.$$  Recall  that $f$ is a bounded metric on $\R^{d}\times \ss$.
  Hence it follows that $$W_{f}(P(t,x,k,\cdot), P(t,z,k,\cdot) )\le  \E [f(\wdt X(t), \wdt \La(t),   \wdt Z(t), \wdt \Xi(t) )]  \to 0 \text{ as } x\to z,$$ where  for two probability measures $\mu$ and $\nu$ on $\R^{d}\times\ss$, the Wasserstein distance $W_{f}(\mu,\nu)$ is defined as $$W_{f}(\mu,\nu):= \inf\bigg\{\sum_{i,j\in\ss}\int f(x,i,y,j) \pi(\d x,i,\d y,j), \pi \in \mathcal{C}(\mu,\nu) \bigg\},$$  here $\mathcal{C}(\mu,\nu) $ is the collection of coupling measures for $\mu$ and $\nu$. Therefore the desired Feller property follows  from Theorem 5.6 of \cite{Chen04}.\qed
\end{proof}

\section{Strong Feller Property}\label{sect-str-Feller}




\begin{Assumption}\label{Assumption-martingale-well-posed}{\rm
For each $k\in \ss$ and   $x\in \R^{d}$, the stochastic differential equation \eqref{eq:X-k} has a non-exploding weak solution $X^{(k)}$ with initial condition  $x$ and the solution is unique in the sense of probability law.
}\end{Assumption}

\begin{Assumption}\label{Assump-Uniform-Elliptic}{\rm
The process $X^{(k)}$ is strong Feller.
} \end{Assumption}

\begin{Assumption}\label{assumption-q-bdd}{\rm
Assume that
\begin{equation}
\label{eq-q-bdd}
H: =\sup\{q_{k}(x): x\in \R^{d}, k\in \ss \}  < \infty,
\end{equation} and that
  there exists a positive constant  $\kappa $ such that
\begin{equation}
\label{eq-q-ratio-bdd}
0\le    q_{kl}(x)  \le \kappa
l 3^{-l} \text{ for all }x\in \R^{d} \text{ and  }k \neq l \in \ss.
\end{equation}
}\end{Assumption}

Let us briefly comment on the above assumptions. The existence and uniqueness of weak solution to \eqref{eq:X-k} is related to the study of martingale problem for L\'evy type operators; see, for example, \cite{Koma-73} and \cite{Stroock-75}. Condition \eqref{eq-q-bdd} in Assumption \ref{Assump-Uniform-Elliptic} is stronger than \eqref{q_k<Hk} in Assumption \ref{assumption-non-linear-growth}. We need such a uniform bound in  \eqref{eq-q-bdd} so that we can establish the series representation for the resolvent
 of the regime-switching jump diffusion $(X,\La)$ in Lemma \ref{lem-resolvant-series}, which, in turn, helps to establish the strong Feller property for $(X,\La)$. In general one can obtain the strong Feller property for $X^{(k)}$ under suitable non-degenerate conditions (\cite{Kunita-13}) and certain regularity conditions such as (local) Lipschitz conditions of the coefficients.
The following non-Lipschitz sufficient condition for strong Feller property   was established in \cite{XiZ-18b}.

\begin{lem}\label{lem-strong-Feller} 
Suppose that Assumptions \ref{Assumption-martingale-well-posed} holds. In addition, for any given $k\in \ss$,
suppose that for each $R > 0$, there exist positive constants $\lambda_{R}$ and $\kappa_{R}$ such that for all $x, z\in \R^{d}$ with $|x| \vee |z| \le R$, we have \begin{align*} & \lan \xi, a(x,k)\xi\ran \ge \lambda_{R}|\xi |^{2}, \quad   \forall  \xi \in \R,\end{align*} and \begin{align*}
&\int_{U}  \bigl[ |c(x,k,u)-c(z,k,u)|^{2} \wedge (4 |x-z| \cdot |c(x,k,u) -c(z,k,u)|)\bigr] \nu(\d u) \\  &\  + 2  \lan x-z, b(x,k)-b(z,k) \ran + |\sg_{\lambda_{R}}(x,k)-\sg_{\la_{R}}(z,k)|^{2 }     \le 2 \kappa_{R} |x-z| \vartheta(|x-z|) \end{align*} whenever $|x-z| \le \delta_{0}$, where $\delta_{0}$ is a positive constant, $\vartheta$ is a nonnegative function   defined on $[0,\delta_{0}]$ satisfying $\lim_{r\to 0} \vartheta (r) =0$, and $\sg_{\lambda_{R}}(x,k)$ is the unique symmetric nonnegative definite matrix-valued function such that $\sg_{\lambda_{R}}(x,k)^{2} = a(x,k)- \lambda_{R}I$. Then the process  $X^{(k)} $  of  \eqref{eq:X-k} is strong Feller continuous.
\end{lem}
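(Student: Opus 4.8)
The plan is to establish the strong Feller property through a coupling argument that exploits the uniform ellipticity to manufacture a common, nondegenerate noise driving two copies of $X^{(k)}$ together. Recall that $X^{(k)}$ is strong Feller precisely when $z \mapsto P_t^{(k)} f(z) := \E_z[f(X^{(k)}(t))]$ is continuous for every bounded Borel $f$, which follows once we show $\|P^{(k)}(t,x,\cdot) - P^{(k)}(t,z,\cdot)\|_{\text{TV}} \to 0$ as $|x-z| \to 0$. I would prove the latter by constructing, for each pair of starting points $x,z$ with $|x|\vee|z| \le R$ and $|x-z| \le \delta_{0}$, a coupling $(\wdt X, \wdt Z)$ of the two solutions whose coupling time $T := \inf\{t \ge 0 : \wdt X(t) = \wdt Z(t)\}$ satisfies $\P\{T > t\} \to 0$ as $|x-z| \to 0$. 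Since the coupling can be arranged so that the diagonal $\{\wdt X = \wdt Z\}$ is absorbing (the two processes are driven identically once they meet), we have $\wdt X(s) = \wdt Z(s)$ for all $s \ge T$, and the elementary coupling bound $\|P^{(k)}(t,x,\cdot) - P^{(k)}(t,z,\cdot)\|_{\text{TV}} \le 2\,\P\{T > t\}$ then yields the claim.

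The coupling is built from the decomposition afforded by nondegeneracy. For $|x| \le R$ the hypothesis $\lan \xi, a(x,k)\xi\ran \ge \lambda_{R} |\xi|^{2}$ lets us write $a(x,k) = \lambda_{R} I + \sg_{\lambda_{R}}(x,k)^{2}$ with $\sg_{\lambda_{R}}(x,k)$ the symmetric nonnegative square root appearing in the statement, so the Brownian part of \eqref{eq:X-k} may be represented as $\sqrt{\lambda_{R}}\, \d W^{(0)} + \sg_{\lambda_{R}}(X,k)\, \d W^{(1)}$ with $W^{(0)}, W^{(1)}$ independent. I would couple $\wdt Z$ to $\wdt X$ by a mirror (reflection) coupling on the common isotropic piece $\sqrt{\lambda_{R}}\,\d W^{(0)}$, reflecting across the hyperplane orthogonal to $\wdt \Delta := \wdt X - \wdt Z$, while using synchronous coupling (the same driving noise) on $\sg_{\lambda_{R}}\,\d W^{(1)}$, on the drift, and on the Poisson integral. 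The reflection turns the common-noise contribution to $\d \wdt \Delta$ into a purely radial term $2\sqrt{\lambda_{R}}\, e\, \d\beta$ with $e = \wdt\Delta/|\wdt\Delta|$ and $\beta$ a one-dimensional Brownian motion; crucially, no transverse diffusion is produced, so $r(t) := |\wdt\Delta(t)|$ evolves like a genuine one-dimensional process rather than a repelling Bessel process, and $0$ is attainable even when $d \ge 2$.

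I would then apply It\^o's formula to $r(t)$, localized by $\tau_{R}$ and $S_{\delta_{0}}$ as in the proofs of \lemref{lem-pathwise-unique} and \thmref{thm-Feller}. The dominant term is the radial martingale $2\sqrt{\lambda_{R}}\,\d\beta$, while the drift of $r$ collects exactly the left-hand side of the displayed inequality: the $\lan x-z, b(x,k)-b(z,k)\ran$ term, the It\^o correction $|\sg_{\lambda_{R}}(x,k) - \sg_{\lambda_{R}}(z,k)|^{2}$, and the nonlocal contribution of the synchronously-coupled jumps. By hypothesis this is bounded by $2\kappa_{R} |\wdt\Delta|\, \vartheta(|\wdt\Delta|)$, so after the radial normalization the ``bad'' drift of $r$ is $O(\vartheta(r))$, negligible relative to the order-one diffusion as $r \downarrow 0$ because $\vartheta(r) \to 0$. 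Comparing $r$ with the one-dimensional diffusion $\d \eta = 2\sqrt{\lambda_{R}}\,\d\beta + \kappa_{R}\vartheta(\eta)\,\d t$ and invoking standard scale-function and first-passage estimates for such a uniformly elliptic one-dimensional process shows that $r$ reaches $0$ before time $t$ with probability tending to $1$ as the initial value $r(0) = |x-z|$ tends to $0$; this is precisely $\P\{T > t\} \to 0$. (Note that here, unlike in \thmref{thm-Feller}, the Osgood condition is not needed: the nondegenerate common noise supplies the contraction, and only $\vartheta(r)\to 0$ is required to make the drift subordinate to it.)

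The main obstacle is the jump term in the non-Lipschitz regime. When applying It\^o's formula to the Euclidean norm $r=|\wdt\Delta|$, the compensated Poisson integral contributes, for each $u$, the increment $|\wdt\Delta + c(\wdt X,k,u) - c(\wdt Z,k,u)| - |\wdt\Delta| - \lan e, c(\wdt X,k,u) - c(\wdt Z,k,u)\ran$, and controlling its $\nu$-integral without Lipschitz continuity of $c$ requires exactly the truncated expression $|c(x,k,u)-c(z,k,u)|^{2} \wedge (4|x-z|\cdot|c(x,k,u)-c(z,k,u)|)$: the quadratic branch governs small jumps, where the second-order Taylor remainder dominates, and the linear branch governs large jumps, where the gradient term dominates. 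This estimate is the analogue of Lemma 4.5 of \cite{XiZ-18b} already invoked in \lemref{lem-Feller-operator-estimation}, and it is what keeps the drift of $r$ of order $|\wdt\Delta|\,\vartheta(|\wdt\Delta|)$. Once this bound is secured the one-dimensional comparison is routine, and the successful-coupling conclusion completes the proof.
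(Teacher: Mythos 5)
The paper does not actually prove this lemma: as stated immediately before it, the result is quoted from \cite{XiZ-18b}, so there is no in-paper argument to compare against line by line. That said, your reconstruction is the right one, and the hypotheses are visibly engineered for exactly the coupling you describe: the decomposition $a(x,k)=\lambda_{R}I+\sg_{\lambda_{R}}(x,k)^{2}$ exists only so that the isotropic piece $\sqrt{\lambda_{R}}\,\d W^{(0)}$ can be reflection-coupled while the $\sg_{\lambda_{R}}$ piece, the drift, and the Poisson integral are coupled synchronously (weak uniqueness from Assumption \ref{Assumption-martingale-well-posed} is what lets you replace the original noise representation and identify the marginal laws); and the truncated integrand $|c(x,k,u)-c(z,k,u)|^{2}\wedge(4|x-z|\,|c(x,k,u)-c(z,k,u)|)$ is precisely the bound for $|w+h|-|w|-\lan w/|w|,h\ran$ that controls the jump contribution to the radial drift without Lipschitz continuity of $c$. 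Your computation that the Bessel repulsion vanishes on the reflected isotropic part and that the residual drift of $r=|\wdt\Delta|$ is $O(\vartheta(r))$ is correct.

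The one step I would tighten is the final passage-time estimate. Because the synchronously coupled jumps make $r$ a semimartingale with jumps (controlled only through the compensator, not pathwise), a literal comparison of $r$ with the one-dimensional diffusion $\d\eta=2\sqrt{\lambda_{R}}\,\d\beta+\kappa_{R}\vartheta(\eta)\,\d t$ is not available; the clean route is to build a concave increasing $f$ with $f(0)=0$, $f(0+)\to 0$, and $2\lambda_{R}f''(r)+2\kappa_{R}\vartheta(r)f'(r)\le -1$ on $(0,\delta_{0}]$ (possible since $\vartheta(r)\to 0$ and the diffusion floor $4\lambda_{R}$ is order one), note that concavity folds the jump generator term into the drift term via $f(b)-f(a)\le f'(a)(b-a)$ together with the truncated estimate, and conclude $\E[T\wedge S_{\delta_{0}}\wedge\tau_{R}]\le f(|x-z|)\to 0$, after which Chebyshev plus the localization bounds on $\P\{S_{\delta_{0}}<T\}$ and $\P\{\tau_{R}<t\}$ give $\P\{T>t\}\to 0$ and the total-variation estimate. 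With that substitution your argument is, as far as I can tell, a correct proof of the lemma and surely the one intended by the cited reference.
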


Next for each $(x,k) \in \R^{d}\times\ss$, as in  \cite[Section 8.2]{Oksendal},   we kill the
  process $X^{(k)}$ at rate $(-q_{kk})$:
\begin{equation}\label{eq-killed process}\begin{aligned}
\E_{k}[f(\wdt{X}^{(k)}_{x}(t))]&  =\disp
\E_{k}\biggl[f(X_{x}^{(k)}(t))\exp
\biggl\{\int_{0}^{t}q_{kk}(X^{(k)}_{x}(s)){\d}s\biggr\}\biggr]\\ & =\disp
\E^{(x,k)}[t<\tau; f(X^{(k)}(t))], \quad f\in \B_{b}(\R^{d}),
\end{aligned}\end{equation} to get a subprocess $\wdt{X}^{(k)}$, where $\tau:=\inf\{t\ge 0: \La(t) \not=\La(0)\}$. Equivalently,
$\wdt{X}^{(k)}$ can be defined as $\wdt{X}^{(k)}(t)=X^{(k)}(t)$ if $t<\tau$
and $\wdt{X}^{(k)}(t)=\partial$ if $t\ge \tau$, where
$  \partial$ is a
cemetery point or a coffin state
 added to $\R^{d}$ as in  \cite[p. 145]
 {Oksendal}.
 Note that in the above,  to get the killed process $\wdt{X}^{(k)}$ from the original process $X^{(k)}$, the killing rate is just the jumping rate of $\La$ from state $k$. Namely, the killing time is just the first switching time $\tau$.
To proceed, we denote the transition probability families of the process $X^{(k)}$ and the killed process
$\wdt{X}^{(k)}$ by $\{P^{(k)}(t,x,A): t \ge 0, x \in \R ^{d}, A \in
\B(\R ^{d})\}$ and $\{\wdt{P}^{(k)}(t,x,A): t \ge 0, x \in \R ^{d}, A \in
\B(\R ^{d})\}$,  respectively.


\begin{lem} \label{Lem-killed process str Feller}
Under  Assumptions \ref{Assumption-martingale-well-posed},  \ref{Assump-Uniform-Elliptic}, and \ref{assumption-q-bdd}, for each $k \in \ss$, the killed process $\wdt{X}^{(k)}$
has strong Feller property.
\end{lem}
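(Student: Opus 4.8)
The plan is to represent the killed semigroup $\wdt P^{(k)}_t$ as a Dyson--Phillips perturbation series in terms of the unkilled transition semigroup $P^{(k)}_t$, which is strong Feller by Assumption~\ref{Assump-Uniform-Elliptic}, and then to transfer continuity term by term. Write $q_k(\cdot) = -q_{kk}(\cdot)\ge 0$ for the killing rate and let $X^{(k)}_x$ denote the solution of \eqref{eq:X-k} started at $x$. The Feynman--Kac formula \eqref{eq-killed process} gives, for $f\in\B_b(\R^d)$,
\[
\wdt P^{(k)}_t f(x)=\E_k\Big[f(X^{(k)}_x(t))\exp\Big\{-\int_0^t q_k(X^{(k)}_x(s))\,\d s\Big\}\Big].
\]
Expanding the exponential and symmetrizing the iterated time integrals yields the series
\[
\wdt P^{(k)}_t f(x)=\sum_{n=0}^{\infty}(-1)^n\int_{0<s_1<\cdots<s_n<t}\E_k\Big[f(X^{(k)}_x(t))\prod_{i=1}^{n}q_k(X^{(k)}_x(s_i))\Big]\,\d s_1\cdots\d s_n.
\]

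Next I would use the Markov property of $X^{(k)}$ repeatedly to rewrite the inner expectation, for fixed times $0<s_1<\cdots<s_n<t$, as a nested application of the unkilled semigroup:
\[
\E_k\Big[f(X^{(k)}_x(t))\prod_{i=1}^{n}q_k(X^{(k)}_x(s_i))\Big]=P^{(k)}_{s_1}\Big[q_k\,P^{(k)}_{s_2-s_1}\big[\,q_k\cdots q_k\,P^{(k)}_{t-s_n}f\,\big]\Big](x).
\]
Because $0\le q_k\le H$ by \eqref{eq-q-bdd}, each inner factor is bounded and measurable with sup-norm at most $H^{n}\norm{f}_{\infty}$, and the outermost operator $P^{(k)}_{s_1}$ acts at a strictly positive time (the boundary $s_1=0$ being null for the integration). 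The strong Feller property of $P^{(k)}$ then renders each such expression a bounded continuous function of $x$, and dominated convergence over the simplex (with the constant majorant $H^{n}\norm{f}_{\infty}$) shows that the $n$-th term is itself continuous in $x$.

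Finally, since the simplex $\{0<s_1<\cdots<s_n<t\}$ has volume $t^n/n!$, the $n$-th term is bounded in sup-norm by $H^{n}\norm{f}_{\infty}\,t^n/n!$, whence
\[
\sum_{n=0}^{\infty}\frac{(Ht)^n}{n!}\norm{f}_{\infty}=e^{Ht}\norm{f}_{\infty}<\infty .
\]
Thus the series converges absolutely and uniformly in $x$ (Weierstrass $M$-test); as a uniform limit of bounded continuous functions it is bounded and continuous. Hence $\wdt P^{(k)}_t f\in C_b(\R^d)$ for every $f\in\B_b(\R^d)$ and $t>0$, which is exactly the strong Feller property of $\wdt X^{(k)}$.

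The hard part will be the rigorous justification of the two interchanges underpinning the series, namely Tonelli's theorem to symmetrize the iterated time integral and the Markov-property unwinding of the product of $q_k$ factors, together with the joint measurability of the nested integrand in $(s_1,\dots,s_n)$ needed to integrate the family of continuous functions and apply dominated convergence. The uniform bound $q_k\le H$ from \eqref{eq-q-bdd} is precisely what controls all of these steps and forces the geometric decay of the terms; I expect condition \eqref{eq-q-ratio-bdd} to be unnecessary for this lemma, as it is reserved for the resolvent representation of the full process $(X,\La)$.
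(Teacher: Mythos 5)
Your proof is correct, but it takes a genuinely different route from the paper's. The paper avoids any series expansion: it sets $g_{s}:=\wdt{P}^{(k)}_{t-s}f$, uses the Markov property to compare $\wdt{P}^{(k)}_{t}f$ with $P^{(k)}_{s}g_{s}$, and bounds the discrepancy by $\bigl(1-e^{-Hs}\bigr)\|f\|$ via the uniform bound \eqref{eq-q-bdd} on the killing rate; since $P^{(k)}_{s}g_{s}\in C_{b}(\R^{d})$ by Assumption \ref{Assump-Uniform-Elliptic}, letting $s\downarrow 0$ exhibits $\wdt{P}^{(k)}_{t}f$ as a uniform limit of continuous functions. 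Your Dyson--Phillips expansion reaches the same conclusion from the same two ingredients (boundedness of $q_{k}$ and strong Feller of the unkilled semigroup), at the cost of the extra bookkeeping you flag yourself: Tonelli to symmetrize the simplex integrals, the Markov-property unwinding, joint measurability of the nested integrand, and dominated convergence in $x$ for each term --- all of which do go through since the integrand is uniformly dominated by $H^{n}\|f\|$ and the outermost semigroup acts at a.e.\ positive time. What your approach buys is an explicit perturbation-series representation of the killed semigroup, which is in fact the semigroup-level analogue of the resolvent series \eqref{(FP17)} and the transition-function series \eqref{(FP22)} that the paper derives later for the full process $(X,\La)$; what the paper's one-step approximation buys is brevity, invoking the strong Feller property of $P^{(k)}$ only once. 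You are also right that \eqref{eq-q-ratio-bdd} plays no role here; the paper's proof likewise uses only \eqref{eq-q-bdd} from Assumption \ref{assumption-q-bdd}.
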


\begin{proof} Let $\{P^{(k)}_{t}\}$ and $\{\wdt{P}^{(k)}_{t}\}$ denote the transition semigroups of $X^{(k)}$ and $\wdt{X}^{(k)}$, respectively. To prove the strong Feller property $\wdt{X}^{(k)}$, we need only
prove that for any given bounded measurable function $f$ on $\R ^{d}$, $\wdt{P}^{(k)}_{t}f(z)$ is continuous with respect to $z$ for all $t>0$. To this end, for fixed $t>0$ and $0<s<t$, set $g_{s}(z):=\wdt{P}^{(k)}_{t-s}f(z)$. Clearly, the function $g_{s}(\cdot)$ is bounded and measurable, see the Corollary to Theorem 1.1 in \cite{ChungZ-95}. By the strong Feller property of $X^{(k)}$,
$P^{(k)}_{s}g_{s} \in C_{b}(\R ^{d})$.

To proceed, by the Markov property, we have that
\begin{equation}\label{Ptildef}
\begin{aligned}
\wdt{P}^{(k)}_{t}f(x) &  =\E_{k}^{(x)}\biggl[f(X^{(k)}(t))\exp
\biggl\{\int_{0}^{t}q_{kk}(X^{(k)}(u)){\d}u\biggr\}\biggr]\\
&  =\E_{k}^{(x)}\biggl[\exp
\biggl\{\int_{0}^{s}q_{kk}(X^{(k)}(u)){\d}u\biggr\}\\
&  \qquad \times\E_{k}^{(X^{(k)}(s))}\biggl[f(X^{(k)}(t-s))\exp
\biggl\{\int_{0}^{t-s}q_{kk}(X^{(k)}(u)){\d}u\biggr\}\biggr]\biggr].
\end{aligned}\end{equation}
Meanwhile, we also have that \begin{equation}\label{PPtildef}
\begin{aligned}  P_{s}^{(k)}g_{s} (x) & =
P_{s}^{(k)}\wdt{P}^{(k)}_{t-s}f(x)  =\E_{k}^{(x)}\biggl[\wdt{P}^{(k)}_{t-s}f(X^{(k)}(s))\biggr]\\
&  =\E_{k}^{(x)}\biggl[\E_{k}^{(X^{(k)}(s))}\biggl[f(X^{(k)}(t-s))\exp
\biggl\{\int_{0}^{t-s}q_{kk}(X^{(k)}(u)){\d}u\biggr\}\biggr]\biggr].
 \end{aligned}\end{equation} Recall from Assumption \ref{Assump-Uniform-Elliptic} that $+\infty>H\ge -\inf\{q_{kk}(x): (x,k)\in \R^{2d}\times \ss\}$
   and $q_{kk}(x)\le 0$, and so \begin{equation}\label{e-Hs} 0\le 1-\exp
\biggl\{\int_{0}^{s}q_{kk}(X^{(k)}(u)){\d}u\biggr\}\le \bigl(1-e^{-Hs}\bigr).\end{equation} Thus, it follows from (\ref{Ptildef}), (\ref{PPtildef}) and (\ref{e-Hs}) that \begin{equation}\label{killedprocess-sfeller}
|P^{(k)}_{s}g_{s}(x)-\wdt{P}^{(k)}_{t}f(x)|\le \bigl(1-e^{-Hs}\bigr)\|f\| \to 0 \,\, \hbox{uniformly as} \,\, s\to 0,\end{equation} where $\|\cdot\|$ denotes the uniform (or supremum) norm. Combining this with the fact that $P^{(k)}_{s}g_{s} \in C_{b}(\R ^{d})$ implies that $\wdt{P}^{(k)}_{t}f\in C_{b}(\R ^{d})$, and so the desired strong Feller property follows.
\qed\end{proof}

The following lemma was proved in \cite{XiZ-18}.

\begin{lem} \label{lem-CP1}
Let $\Xi$ be a right continuous strong Markov process  and $q: \R^{d}\mapsto \R$ a nonnegative
bounded measurable function. Denote by $\wdt{\Xi}$   the subprocess of $\Xi$ killed at   rate $q$ with
lifetime $\zeta$:
\begin{equation}\label{kp2}
\E[f(\wdt{\Xi}^{(z)}({t}))] :=\disp \E\bigl[ t<\zeta;f(\Xi^{(z)}(t))\bigr]
=\E\biggl[f(\Xi^{(z)}(t))\exp\biggl\{-\int_{0}^{t}q(\Xi^{(z)}(s)){\d}s
\biggr\}\biggr].
\end{equation}
Then for any constant $\al >0$ and   nonnegative function  $\phi$ on $\R^{d}$, we have
\begin{equation}\label{eq:zeta}
\E[e^{-\al \zeta}\phi(\wdt{\Xi}^{(z)}(\zeta-))]=G_{\al}^{\wdt{\Xi}}(q\phi)(z),
\end{equation} where $\{G_{\al}^{\wdt{\Xi}}, \al >0\}$ denotes the resolvent
for the killed process $\wdt{\Xi}$.
\end{lem}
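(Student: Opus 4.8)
The plan is to realize the killing by an independent exponential clock and thereby reduce the identity \eqref{eq:zeta} to a conditional-density computation followed by Fubini's theorem. Enlarge the probability space to carry a random variable $E$ with the unit-mean exponential distribution, independent of $\Xi$, and set $A_{t} := \int_{0}^{t} q(\Xi^{(z)}(s))\,\d s$ together with $\zeta := \inf\{t \ge 0 : A_{t} \ge E\}$. Since $q$ is nonnegative and bounded and $\Xi$ is right continuous, $A$ is a continuous nondecreasing additive functional, and the prescription \eqref{kp2} for the killed process $\wdt\Xi$ is recovered because $\P(\zeta > t \mid \Xi) = \P(E > A_{t}\mid \Xi) = e^{-A_{t}}$. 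Consequently, conditional on the whole trajectory of $\Xi$, an integration by parts shows that the sub-probability law of $\zeta$ on $[0,\infty)$ has Lebesgue--Stieltjes density $\d(-e^{-A_{t}}) = e^{-A_{t}}\,\d A_{t} = q(\Xi^{(z)}(t))\,e^{-A_{t}}\,\d t$, while the residual mass $e^{-A_{\infty}}$ is carried by $\{\zeta = \infty\}$.

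Next I would identify the killing position. Because $\wdt\Xi^{(z)}(t) = \Xi^{(z)}(t)$ for $t < \zeta$, the left limit satisfies $\wdt\Xi^{(z)}(\zeta-) = \Xi^{(z)}(\zeta-)$, which is meaningful by the assumed existence of left-hand limits. For $\al > 0$ the factor $e^{-\al\zeta}$ annihilates the atom at $\zeta=\infty$, so conditioning on $\Xi$ and using the conditional density above yields
\[
\E\big[e^{-\al\zeta}\phi(\wdt\Xi^{(z)}(\zeta-)) \,\big|\, \Xi\big] = \int_{0}^{\infty} e^{-\al t}\,\phi(\Xi^{(z)}(t-))\,q(\Xi^{(z)}(t))\,e^{-A_{t}}\,\d t.
\]
Since the jump times of $\Xi$ form a random set that is countable and hence Lebesgue-null, one may replace $\phi(\Xi^{(z)}(t-))$ by $\phi(\Xi^{(z)}(t))$ inside the $\d t$-integral, producing the integrand $(q\phi)(\Xi^{(z)}(t))\,e^{-A_{t}}$.

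Taking expectations and interchanging the order of integration by Fubini's theorem (legitimate by nonnegativity for $\phi \ge 0$, first for bounded $\phi$ and then for general nonnegative $\phi$ by monotone convergence), I obtain
\[
\E\big[e^{-\al\zeta}\phi(\wdt\Xi^{(z)}(\zeta-))\big] = \int_{0}^{\infty} e^{-\al t}\,\E\big[(q\phi)(\Xi^{(z)}(t))\,e^{-\int_{0}^{t} q(\Xi^{(z)}(s))\,\d s}\big]\,\d t.
\]
By the defining relation \eqref{kp2} applied to the bounded measurable function $q\phi$, the inner expectation equals $\E[(q\phi)(\wdt\Xi^{(z)}(t))]$; hence the right-hand side is exactly $\int_{0}^{\infty} e^{-\al t}\,\E[(q\phi)(\wdt\Xi^{(z)}(t))]\,\d t = G_{\al}^{\wdt\Xi}(q\phi)(z)$, which is \eqref{eq:zeta}.

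The hard part will be making rigorous the heuristic ``density'' of $\zeta$: one must verify that, conditional on the trajectory of $\Xi$, the sub-probability law of $\zeta$ on $[0,\infty)$ is absolutely continuous with the claimed density. This rests precisely on the continuity of $A$ (so that $-e^{-A_{\cdot}}$ carries no atoms on $(0,\infty)$) and on the independence of $E$ from $\Xi$, which are exactly what the exponential-clock construction supplies. A secondary, purely technical subtlety is the harmless replacement of the left limit $\Xi^{(z)}(\zeta-)$ by $\Xi^{(z)}(\zeta)$ inside the time integral and the justification of the Fubini interchange. Notably, none of these steps uses the specific structure of the regime-switching jump diffusion: the argument relies only on $\Xi$ being a right continuous strong Markov process possessing left-hand limits and on $q$ being nonnegative and bounded.
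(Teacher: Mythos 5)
The paper does not actually prove this lemma---it is quoted from [Xi and Zhu, 2018b]---and the argument given there is the same exponential-clock computation you propose. Your proof is correct: since $A_t=\int_0^t q(\Xi^{(z)}(s))\,\d s$ is absolutely continuous, the conditional law of $\zeta$ given the trajectory has density $q(\Xi^{(z)}(t))e^{-A_t}\,\d t$ on $(0,\infty)$, the left limit $\Xi^{(z)}(t-)$ may be replaced by $\Xi^{(z)}(t)$ off the countable (hence Lebesgue-null) set of jump times, and Tonelli together with \eqref{kp2} applied to $q\phi$ gives \eqref{eq:zeta}.
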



  For each $k \in \ss$, let $\{\wdt{G}^{(k)}_\al, \al>0\}$
be the resolvent for the generator $\LL_k+q_{kk}$.
Denote by $\{G_{\alpha}, \alpha >0 \}$
the resolvent for the generator $\A$ defined in \eqref{eq-operator}.
  Let
\begin{displaymath}
\wdt{G}_{\al}=\left(\begin{array}{cccc}
\wdt{G}^{(1)}_{\al} & 0 & 0 & \dots\\
0 & \wdt{G}^{(2)}_{\al} & 0 & \dots\\
0 & 0 & \wdt{G}^{(3)}_{\al} & \dots\\
\vdots & \vdots & \vdots & \ddots
\end{array} \right) \ \hbox{ and } \  Q^{0}(x)=Q(x)-\begin{pmatrix}
q_{11}(x) & 0 & 0 & \dots\\
0 & q_{22}(x) & 0 & \dots\\
0 & 0 & q_{33}(x) & \dots\\
\vdots & \vdots & \vdots & \ddots
\end{pmatrix} .
\end{displaymath}

\begin{lem} \label{lem-resolvant-series}
Suppose that  Assumptions \ref{Assumption-martingale-well-posed},  \ref{Assump-Uniform-Elliptic}, and \ref{assumption-q-bdd} hold. Then there exists a constant
${\al}_1>0$ such that for any ${\al}\ge {\al}_1$ and any
$f(\cdot,k)\in \B_{b}(\R^{d})$ with $k \in \ss$,
\begin{equation}\label{(FP17)}
G_{\al}f=\wdt{G}_{\al}
f+\sum_{m=1}^{\infty}\wdt{G}_{\al}\bigl(Q^{0}\wdt{G}_{\al}\bigr)^{m}f.
\end{equation}
\end{lem}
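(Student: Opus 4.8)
The plan is to establish a first-passage renewal equation for $G_\alpha$ and then expand it into a Neumann series, with the first switching time of $\La$ playing the role of the killing time of the subprocesses $\wdt X^{(k)}$. Throughout I work in $\B_b(\R^{d}\times\ss)$ with the supremum norm and abbreviate $T_\alpha:=\wdt G_\alpha Q^{0}$.

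First I would record the operator bounds that make the series summable. Since $\wdt X^{(k)}$ is obtained from $X^{(k)}$ by killing at the nonnegative rate $-q_{kk}=q_k$, its semigroup is sub-Markovian, so $\alpha\,\wdt G^{(k)}_\alpha\one\le\one$ and hence $\|\wdt G_\alpha\|\le 1/\alpha$. By the definition of $Q^{0}$ and the uniform bound \eqref{eq-q-bdd} of Assumption \ref{assumption-q-bdd}, $|Q^{0}f(x,k)|\le\sum_{l\neq k}q_{kl}(x)\|f\|=q_k(x)\|f\|\le H\|f\|$, so $\|Q^{0}\|\le H$. Consequently $\|T_\alpha\|\le H/\alpha$, and I would take any $\alpha_1>H$, so that $\|T_\alpha\|\le H/\alpha<1$ whenever $\alpha\ge\alpha_1$. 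Because the full process $(X,\La)$ is non-exploding by Theorem \ref{thm-soln-existence-uniqueness}, it is conservative and $G_\alpha$ is bounded with $\|G_\alpha\|\le 1/\alpha$.

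The heart of the argument is the identity
\begin{equation}\label{planrenewal}
G_\alpha f = \wdt G_\alpha f + \wdt G_\alpha Q^{0} G_\alpha f .
\end{equation}
To derive it I would split $G_\alpha f(x,k)=\E_{x,k}\int_0^\infty e^{-\alpha t}f(X(t),\La(t))\,\d t$ at the first switching time $\tau:=\inf\{t\ge 0:\La(t)\neq k\}$. On $[0,\tau)$ one has $\La(t)=k$ and $X(t)=X^{(k)}(t)$, so the killed-process representation \eqref{eq-killed process} identifies $\E_{x,k}\int_0^\tau e^{-\alpha t}f(X^{(k)}(t),k)\,\d t$ with $(\wdt G_\alpha f)(x,k)$. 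For the tail, the strong Markov property at $\tau$ together with the switching law \eqref{eq-switching-to-location} gives $\E_{x,k}[e^{-\alpha\tau}G_\alpha f(X(\tau-),\La(\tau))]=\E_{x,k}[e^{-\alpha\tau}\sum_{l\neq k}\tfrac{q_{kl}(X(\tau-))}{q_k(X(\tau-))}G_\alpha f(X(\tau-),l)]$; applying Lemma \ref{lem-CP1} with $\Xi=X^{(k)}$, $q=q_k$, and $\phi(\cdot)=\sum_{l\neq k}\tfrac{q_{kl}(\cdot)}{q_k(\cdot)}G_\alpha f(\cdot,l)$ turns this into $\wdt G^{(k)}_\alpha(q_k\phi)(x)=(\wdt G_\alpha Q^{0}G_\alpha f)(x,k)$, since $q_k\phi=\sum_{l\neq k}q_{kl}G_\alpha f(\cdot,l)=Q^{0}G_\alpha f(\cdot,k)$. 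Absolute convergence of the $l$-sum follows from $\sum_l q_{kl}\le H$ and $\|G_\alpha f\|\le\|f\|/\alpha$.

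With \eqref{planrenewal} in hand I would iterate it $M$ times to get $G_\alpha f=\sum_{m=0}^{M}T_\alpha^{m}\wdt G_\alpha f+T_\alpha^{M+1}G_\alpha f$, noting that $T_\alpha^{m}\wdt G_\alpha=\wdt G_\alpha(Q^{0}\wdt G_\alpha)^{m}$, so these partial sums are exactly the truncations of \eqref{(FP17)}. The remainder obeys $\|T_\alpha^{M+1}G_\alpha f\|\le(H/\alpha)^{M+1}\|f\|/\alpha\to 0$ as $M\to\infty$ because $H/\alpha<1$, which yields the claimed series. I expect the main obstacle to be the rigorous justification of \eqref{planrenewal}: one must check that the continuous component does not jump at the switching instant, so that $X(\tau)=X(\tau-)=\wdt X^{(k)}(\tau-)$ (as $\tau$ is a.s.\ not an atom of $N$), confirm that the conditional law \eqref{eq-switching-to-location} is precisely the $q_{kl}/q_k$ weighting required by Lemma \ref{lem-CP1}, and dispose of the degenerate set $\{q_k=0\}$ (where no switch occurs, $e^{-\alpha\tau}=0$, and the tail term vanishes, consistently with $\wdt G^{(k)}_\alpha=G^{(k)}_\alpha$). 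Everything else is a routine Neumann-series estimate controlled by the finiteness of $H$ in Assumption \ref{assumption-q-bdd}.
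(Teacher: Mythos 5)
Your proposal is correct and follows essentially the same route as the paper: split $G_\alpha f$ at the first switching time $\tau$, identify the pre-$\tau$ piece with $\wdt G_\alpha f$, convert the post-$\tau$ piece into $\wdt G_\alpha Q^{0}G_\alpha f$ via Lemma \ref{lem-CP1}, and iterate into a convergent Neumann series. The only cosmetic difference is that you control $\|Q^{0}\|$ by the uniform bound $H$ from \eqref{eq-q-bdd} and take $\alpha_1>H$, whereas the paper sums \eqref{eq-q-ratio-bdd} to get the factor $3\kappa/4$ and sets $\alpha_1=(3\kappa+1)/4$; both are valid under Assumption \ref{assumption-q-bdd}.
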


\begin{proof}
 Let $f(z,k)\geq
0$ on $\R ^{2d} \times \ss$. Applying the strong Markov property at
the first switching time $\tau$ and recalling the construction of
$(Z,\La)$, we obtain
 \begin{align*} \disp
G_\al f(z,k) & =\E_{z,k} \biggl[\int_0^\infty e^{-{\al} t}
f(Z(t),\La(t)){\d}t \biggr]\\
& =\E_{z,k} \biggl[\int_0^\tau e^{-{\al} t} f(Z(t), k){\d}t \biggr]+
\E_{z,k} \biggl[\int_\tau^\infty e^{-{\al} t}
f(Z(t), \La(t)){\d}t \biggr]\\
& =\wdt{G}^{(k)}_{\al} f(z,k)+\E_{z,k} \biggl[ e^{-{\al} \tau}
G_{\al} f(Z(\tau), \La(\tau))\biggr]\\
& =\wdt{G}^{(k)}_{\al}
f(z,k)+\sum_{l \in \ss\setminus \{k\}}\E_{z,k}
\biggl[
e^{-{\al} \tau} 
\biggl(-\,\frac{q_{kl}}{q_{kk}}\biggr)(Z({\tau-}))
G_{\al} f(Z({\tau-}),l)\biggr]\\
&  =\wdt{G}^{(k)}_{\al} f(z,k)+\sum_{l \in \ss\setminus \{k\}}
\wdt{G}_{\al}^{(k)} (q_{kl} G_{\al} f(\cdot, l))(z),
\end{align*}
where  the last equality follows from
(\ref{eq:zeta})  in Lemma~\ref{lem-CP1}.
Hence we have
\begin{equation}\label{(FP18)}
G_{\al}f(z,k)=\wdt{G}^{(k)}_{\al}f(\cdot,k)(z)
+\wdt{G}^{(k)}_{\al}\Biggl(\sum_{l \in \ss \setminus
\{k\}}q_{kl}G_{\al}f(\cdot,l)\Biggr)(z).
\end{equation} Repeating the above argument,
the second term on
the right-hand side of (\ref{(FP18)}) equals
$$\wdt{G}^{(k)}_{\al}\Biggl(\sum_{l \in \ss \setminus
\{k\}}q_{kl}\wdt{G}^{(l)}_{\al}f(\cdot,l)\Biggr)(z)+
\wdt{G}^{(k)}_{\al}\Biggl(\sum_{l \in \ss \setminus
\{k\}}q_{kl}\wdt{G}^{(l)}_{\al}\Biggl(\sum_{l_{1} \in \ss \setminus
\{l\}}q_{ll_{1}}G_{\al}f(\cdot,l_{1})\Biggr)\Biggr)(z).$$ Hence, we
further obtain that for any fixed $k \in \ss$ and any integer $m\ge
1$,
\begin{equation}\label{(FP19)}
G_{\al}f(z,k)=\sum_{i=0}^{m} \psi^{(k)}_{i}(z)+R^{(k)}_{m}(z),
\end{equation} where
 \begin{align*}
&   \psi^{(k)}_{0}=\wdt{G}^{(k)}_{\al}f(\cdot,k), \\
 &  \psi^{(k)}_{1}=\wdt{G}^{(k)}_{\al}\Biggl(\sum_{l \in \ss \setminus
\{k\}}q_{kl}\wdt{G}^{(l)}_{\al}f(\cdot,l)\Biggr)=\wdt{G}^{(k)}_{\al}\Biggl(\sum_{l
\in \ss \setminus \{k\}}q_{kl}\psi^{(l)}_{0}\Biggr), \\
&  \psi^{(k)}_{i}=\wdt{G}^{(k)}_{\al}\Biggl(\sum_{l
\in \ss \setminus \{k\}}q_{kl}\psi^{(l)}_{i-1}\Biggr)\quad \hbox{for} \quad i\ge 1, \intertext{and}
& R_{m}^{(k)} = \wdt{G}^{(k)}_{\al}\Biggl(\sum_{l_{1} \in \ss\setminus\{k \}} q_{k,l_{1}} \wdt{G}^{(l_{1})}_{\al}
   \Biggl(\sum_{l_{2}\in \ss \setminus \{l_{1}\}}q_{l_{1}, l_{2}}\wdt{G}^{(l_{2})}_{\al}
    \Biggl( \dots \Biggl( \sum_{l_{m-1} \in \ss\setminus\{l_{m-2}\}} q_{l_{m-2}, l_{m-1}}  \\
    & \qquad \qquad\qquad\qquad\qquad\qquad \wdt{G}^{(l_{m-1})}_{\al} \Biggl( \sum_{l_{m} \in \ss\setminus\{l_{m-1} \}} q_{l_{m-1}, l_{m}}G_{\al} f (\cdot, l_{m}) \Biggr)\Biggr)\Biggr)\Biggr) \Biggr).
\end{align*}
We have
\begin{equation}\label{eq-psi0}\|\psi_{0}^{(k)}\|=\biggl\|\E_{\cdot,k} \biggl[\int_0^\tau e^{-{\al} t} f(Z(t), k){\d}t \biggr]\biggr\|\le \frac{\|f\|}{\al}.\end{equation} Note that the same calculation reveals that  \eqref{eq-psi0} in fact holds for all $l \in \ss$, $\|\psi_{0}^{(l)}\|\le \frac{\|f\|}{\al}.$
Thanks to Assumption  \ref{assumption-q-bdd},   $q_{kl}(z) \le   \frac{\kappa l}{3^{l}}$  for all  $l \neq k$  and $x \in \R^{d}$.  Consequently, we can compute \begin{equation}\label{eq-psi1} \begin{aligned}\|\psi^{(k)}_{1}\| \le \sum_{l \in \ss \setminus
\{k\}}\|\wdt{G}^{(k)}_{\al}(q_{kl}\psi^{(l)}_{0})\| & \le
\sum_{l \in \ss \setminus\{k\}} \frac{\kappa l}{3^{l}} \cdot\frac{ \|\psi^{(l)}_{0}\| }{\alpha} \\ & \le \sum_{l \in \ss \setminus\{k\}} \frac{\kappa l}{3^{l}} \cdot\frac{  \|f\|  }{\alpha^{2}} =  \frac{3 \kappa}{4\alpha}\cdot \frac{\|f\|}{\al}.\end{aligned}\end{equation} As before, we observe that \eqref{eq-psi1} actually holds for all $l \in \ss$. Similarly,
we can use induction to show that \begin{equation}\label{eq-psi-i}
\|\psi^{(k)}_{i}\|\le  \biggl(\frac{3\kappa}{4\alpha}\biggr)^{i} \cdot \frac{\|f\|}{\alpha}, \qquad  \text{ for   }i \ge 2,\end{equation} and  \begin{equation}\label{eq-R_m^k-estimate}
\|R^{(k)}_{m}\|\le     \biggl(\frac{3\kappa}{4\alpha}\biggr)^{m+1}\cdot\frac{\|f\|}{\al}.\end{equation} Now let $\al_{1}: = \frac{3\kappa+1}{4}$ and $\al \ge \al_{1}$. Then we have for each $k \in \ss$,
$G_{\al}f(\cdot,k)=\sum_{i=0}^{\infty} \psi^{(k)}_{i}$, which
clearly implies (\ref{(FP17)}).  The lemma is proved. \qed \end{proof}

Lemma \ref{lem-resolvant-series} establishes an explicit  relationship of the resolvents for $(Z,\La)$ and the killed processes $\wdt{Z}^{(k)}$, $k\in \ss$. This, together with the strong Feller property for the killed processes $\wdt{Z}^{(k)}$, $k\in \ss$ (Lemma \ref{Lem-killed process str Feller}), enables us to  derive the strong Feller property for $(Z,\La)$ in the following theorem.

\begin{thm} \label{thm-sFeller} Suppose that  Assumptions \ref{Assumption-wk soln-well-posed}, \ref{Assumption-martingale-well-posed},  \ref{Assump-Uniform-Elliptic}, and \ref{assumption-q-bdd} hold.  Then the process $(X,\La)$ has the strong Feller property.
\end{thm}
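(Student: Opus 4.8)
The plan is to exploit the resolvent series representation of Lemma~\ref{lem-resolvant-series} together with the strong Feller property of the killed subprocesses from Lemma~\ref{Lem-killed process str Feller}. Since $\ss$ carries the discrete metric, a function on $\R^{d}\times\ss$ is continuous in the metric $\la$ if and only if each section $x\mapsto g(x,k)$ is continuous on $\R^{d}$; it therefore suffices to show that for every $f\in\B_{b}(\R^{d}\times\ss)$ and every $k\in\ss$ the section $G_{\al}f(\cdot,k)$ is bounded and continuous on $\R^{d}$ for $\al\ge\al_{1}$, i.e. that $G_{\al}$ maps $\B_{b}(\R^{d}\times\ss)$ into $C_{b}(\R^{d}\times\ss)$. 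First I would record that, because each killed process $\wdt X^{(k)}$ is strong Feller (Lemma~\ref{Lem-killed process str Feller}), the associated killed resolvent $\wdt G_{\al}^{(k)}=\int_{0}^{\infty}e^{-\al t}\wdt P_{t}^{(k)}\,\d t$ also maps $\B_{b}(\R^{d})$ into $C_{b}(\R^{d})$: for $\e>0$ one writes $\int_{\e}^{\infty}e^{-\al t}\wdt P^{(k)}_{t}h\,\d t=e^{-\al\e}\wdt P^{(k)}_{\e}\wdt G^{(k)}_{\al}h$, which is continuous by the strong Feller property of $\wdt P^{(k)}_{\e}$, while the tail $\int_{0}^{\e}e^{-\al t}\wdt P^{(k)}_{t}h\,\d t$ has sup-norm at most $\e\|h\|$, so $\wdt G^{(k)}_{\al}h$ is a uniform limit of continuous functions, hence continuous.

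The core of the argument is then an induction on the terms $\psi^{(k)}_{i}$ in the expansion $G_{\al}f(\cdot,k)=\sum_{i=0}^{\infty}\psi^{(k)}_{i}$ established in the proof of Lemma~\ref{lem-resolvant-series}, where $\psi^{(k)}_{0}=\wdt G^{(k)}_{\al}f(\cdot,k)$ and $\psi^{(k)}_{i}=\wdt G^{(k)}_{\al}\bigl(\sum_{l\in\ss\setminus\{k\}}q_{kl}\psi^{(l)}_{i-1}\bigr)$ for $i\ge1$. I claim each $\psi^{(k)}_{i}$ lies in $C_{b}(\R^{d})$. The base case is immediate from the previous paragraph since $f(\cdot,k)\in\B_{b}(\R^{d})$. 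For the inductive step, assume $\psi^{(l)}_{i-1}\in C_{b}(\R^{d})$ with the uniform-in-$l$ bounds \eqref{eq-psi0}--\eqref{eq-psi-i} on $\|\psi^{(l)}_{i-1}\|$. Then $x\mapsto\sum_{l\in\ss\setminus\{k\}}q_{kl}(x)\psi^{(l)}_{i-1}(x)$ is Borel measurable and, by the summable bound $q_{kl}(x)\le\kappa l\,3^{-l}$ of Assumption~\ref{assumption-q-bdd}, bounded uniformly in $x$; applying the smoothing operator $\wdt G^{(k)}_{\al}$ returns a function in $C_{b}(\R^{d})$. The decisive point is that continuity is produced entirely by $\wdt G^{(k)}_{\al}$, so no continuity of the rate functions $q_{kl}$ is needed---only their measurability and the uniform summability from Assumption~\ref{assumption-q-bdd}.

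Finally, the estimate \eqref{eq-psi-i} gives $\|\psi^{(k)}_{i}\|\le(3\kappa/(4\al))^{i}\,\|f\|/\al$ with $3\kappa/(4\al)<1$ for $\al\ge\al_{1}=(3\kappa+1)/4$, uniformly in $k$, so the series $\sum_{i}\psi^{(k)}_{i}$ converges uniformly on $\R^{d}$; being a uniform limit of the continuous functions $\psi^{(k)}_{i}$, the sum $G_{\al}f(\cdot,k)$ is continuous and bounded by $(\|f\|/\al)\,(1-3\kappa/(4\al))^{-1}$. Hence $G_{\al}$ maps $\B_{b}(\R^{d}\times\ss)$ into $C_{b}(\R^{d}\times\ss)$ for all $\al\ge\al_{1}$ (and, by the resolvent equation, for all $\al>0$), which together with the well-posedness of $(X,\La)$ from Assumption~\ref{Assumption-wk soln-well-posed} yields the strong Feller property of $(X,\La)$. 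I expect the main obstacle to be the simultaneous control of continuity and the infinite summation over the countable switching space: one must justify the rearrangement $G_{\al}f=\sum_{i}\psi^{(k)}_{i}$ and the interchange of limit and sum uniformly in $k$, which is exactly what the geometric, $k$-independent bounds of Lemma~\ref{lem-resolvant-series} (driven by $q_{kl}\le\kappa l\,3^{-l}$) secure; the subtler conceptual step is recognizing that the regularization furnished by the strong Feller killed resolvents, rather than any regularity of $Q(x)$, is what delivers continuity.
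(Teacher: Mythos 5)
Your argument up to the last step is sound and close in spirit to the paper's: the continuity of each $\psi^{(k)}_{i}$ via the smoothing identity $\int_{\e}^{\infty}e^{-\al t}\wdt P^{(k)}_{t}h\,\d t=e^{-\al\e}\wdt P^{(k)}_{\e}\bigl(\wdt G^{(k)}_{\al}h\bigr)$, the induction using only measurability and the summable bound $q_{kl}\le\kappa l\,3^{-l}$, and the uniform convergence of the series from the geometric estimates \eqref{eq-psi-i} are all correct. The genuine gap is the very last inference: you conclude the strong Feller property of $(X,\La)$ from the fact that the \emph{resolvent} $G_{\al}$ maps $\B_{b}$ into $C_{b}$. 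Resolvent strong Feller does not imply semigroup strong Feller. The standard counterexample is uniform translation on $\R$, $P_{t}f(x)=f(x+t)$: its resolvent $G_{\al}f(x)=e^{\al x}\int_{x}^{\infty}e^{-\al s}f(s)\,\d s$ is absolutely continuous in $x$, hence maps $\B_{b}(\R)$ into $C_{b}(\R)$, yet $P_{t}$ is certainly not strong Feller. So knowing $G_{\al}f\in C_{b}$ for all $f\in\B_{b}(\R^{d}\times\ss)$ says nothing, by itself, about $P_{t}f$ for fixed $t>0$, and invoking Assumption \ref{Assumption-wk soln-well-posed} does not close this gap.

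The paper avoids the problem by running the same interlacing expansion at the level of the transition function rather than the resolvent: $P(t,(x,k),A\times\{l\})$ is written as a series (obtained from Lemma \ref{lem-resolvant-series} by inverting the Laplace transform) of nonnegative terms, each a finite composition of the killed kernels $\wdt P^{(l_{i})}$ at \emph{fixed positive times} intertwined with the bounded rates $q_{l_{i}l_{i+1}}$. Since each $\wdt X^{(k)}$ is strong Feller (Lemma \ref{Lem-killed process str Feller}), each term is lower semicontinuous in $x$ when $A$ is open, a countable sum of nonnegative lower semicontinuous functions is lower semicontinuous, and the Meyn--Tweedie characterization (strong Feller $\Leftrightarrow$ $P(t,\cdot,O)$ lower semicontinuous for all open $O$) then yields the claim. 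To repair your proof you would need to transfer your induction from $\wdt G^{(k)}_{\al}$ to the kernels $\wdt P^{(k)}_{t}$ themselves, i.e.\ work with the time-domain series rather than its Laplace transform.
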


\begin{proof} The proof is almost identical to that of Theorem 5.4 in \cite{XiZ-18} and for brevity, we shall only give a sketch here. Denote the transition probability family of Markov process $(X,\La)$ by $\{P(t,(x,k),A): t\ge 0,(x,k)\in \R^{d} \times \ss, A\in {\cal B}(\R ^{d} \times \ss)\}$. Then it follows from Lemma \ref{lem-resolvant-series} that
\begin{equation}\label{(FP22)}\begin{aligned} &  P(t,(x,k),A\times \{l\})=\delta_{kl} \wdt{P}^{(k)}(t,x,A)\\ & \ \
   +\sum_{m=1}^{+\infty} \  \idotsint\limits_{0<t_{1}<\cdots
<t_{m}<t}
  \sum_{{l_{1}\in \ss\setminus\{l_{0}\}, l_{2}\in \ss\setminus\{l_{1}\}, \dots, l_{m}\in
\ss\setminus\{l_{m-1}\},}\atop{l_{0}=k, \, l_{m}=l}}\int_{\R ^{d}} \cdots
\int_{\R ^{d}}  \wdt{P}^{(l_{0})}(t_{1},x,{\d}x_{1})\\
&   \ \ \times q_{l_{0}l_{1}}(x_{1}) \wdt{P}^{(l_{1})}(t_{2}-t_{1},x_{1},{\d}x_{2})\cdots
q_{l_{m-1}l_{m}}(x_{m})\wdt{P}^{(l_{m})}(t-t_{m},x_{m},A) {\d}t_{1} {\d}t_{2}
\dots {\d}t_{m},\end{aligned}\end{equation} where $\delta_{kl}$ is the Kronecker
symbol in $k$, $l$, which equals $1$ if $k=l$ and  $0$ if $k\neq l$.
By Lemma~\ref{Lem-killed process str Feller}, we know that for every $k \in\ss$, $\wdt{X}^{(k)}$ has the strong Feller property. Therefore, in view of
Proposition 6.1.1 in \cite{MeynT-93}, 
we
derive that $\wdt{P}^{(k)}(t,x,A)$ and every term in the series on
the right-hand side of \eqref{(FP22)}
are lower semicontinuous with respect to $x$
whenever $A$ is an open set in $\B(\R ^{d})$. Note that $\ss$ is a countably infinite set and has
discrete metric. Therefore it follows that
the left-hand side of  \eqref{(FP22)} is lower semicontinuous with
respect to $(x,k)$ for every $l \in \ss$ whenever $A$ is an open set
in $\B(\R ^{d})$.  Consequently, $(X,\La)$ has the strong Feller property
(see Proposition 6.1.1 in \cite{MeynT-93} again). The theorem is
proved.\qed \end{proof}

\begin{rem}{\rm
\cite{Shao-15} proves that for a state-independent regime-switching diffusion processes, the strong Feller property for each subdiffusion implies the strong Feller property for regime-switching diffusion processes. This work further proves this implication for state-dependent  regime-switching jump diffusion processes.
}\end{rem}

\begin{rem}{\rm The strong Feller property for regime-switching jump diffusions was also studied in \cite{XiZ-17}, where it is assumed that $\nu(U) < \infty$ is a finite measure, i.e.,  the jump part is modeled  by a compound Poisson process. In addition,   a finite-range condition for the switching component   is placed in that paper and is key to the analyses there. Here these two restrictions are removed. }
\end{rem}


\end{document}